\documentclass[11pt, a4paper]{article}

\marginparwidth 0pt \oddsidemargin 0pt \evensidemargin 0pt
\topmargin -1.2 cm \textheight 23.5 truecm \textwidth 16.0 truecm

\baselineskip 15pt

\usepackage{indentfirst}
\usepackage{lineno}
\usepackage{graphicx}
\usepackage{ae}
\usepackage{amsmath}
\usepackage{amssymb}
\usepackage{latexsym}
\usepackage{url}
\usepackage{epsfig}
\usepackage{cite}
\usepackage{mathrsfs}
\usepackage{amsfonts}
\usepackage{amsthm}
\usepackage{float}
\usepackage{longtable}
\allowdisplaybreaks[3]
\usepackage[numbers,sort&compress]{natbib}
\usepackage{caption}
\usepackage{CJK}

\allowdisplaybreaks  

\def\qed{\hfill$\Box$\vspace{12pt}}

\usepackage{color}

\long\def\delete#1{}



\newcommand{\be}{\begin{equation}}
\newcommand{\ee}{\end{equation}}
\newcommand{\ben}{\begin{equation*}}
\newcommand{\een}{\end{equation*}}
\newcommand{\bea}{\begin{eqnarray}}
\newcommand{\eea}{\end{eqnarray}}
\newcommand{\bean}{\begin{eqnarray*}}
\newcommand{\eean}{\end{eqnarray*}}

\def\det{{\rm det}}

\def\tr{{\rm tr}}


\newtheorem{thm}{Theorem}[section]
\newtheorem{cor}[thm]{Corollary}

\newtheorem{exam}[thm]{Example}
\newtheorem{lem}[thm]{Lemma}
\newtheorem{prop}[thm]{Proposition}
\newtheorem{prob}[thm]{Problem}

\newtheorem{rem}[thm]{Remark}
\numberwithin{equation}{section}

\title{\textbf{On the $A_{\alpha}$-characteristic polynomial of a graph}}

\author{Xiaogang Liu\thanks{Supported by the National Natural Science Foundation of China (Nos. 11361033 and 11601431), the China Postdoctoral Science Foundation (No. 2016M600813), the Natural Science Foundation of Shaanxi Province (No. 2017JQ1019)  and the Scientific Research Foundation of NPU (No. 3102016OQD029).}\\
{\small Department of Applied Mathematics}\\[-0.8ex]
{\small Northwestern Polytechnical University}\\[-0.8ex]
{\small Xi'an, Shaanxi 710072, P.R. China}\\
\emph{{\small \tt xiaogliu@nwpu.edu.cn}}
\and
Shunyi Liu\thanks{Supported by the National Natural Science Foundation of China (Nos. 11501050 and 11401044) and the Natural Science Basic Research Plan in Shaanxi Province of China (No. 2016JM6081).}\\
{\small School of Science}\\[-0.8ex]
{\small Chang'an University}\\[-0.8ex]
{\small Xi'an, Shaanxi 710064, P.R. China}\\
\emph{{\small \tt liu@chd.edu.cn}} }
\date{}

\begin{document}
\begin{CJK*}{GBK}{song}
\openup 0.5\jot
\maketitle

\begin{abstract}
Let $G$ be a graph with $n$ vertices, and let $A(G)$ and $D(G)$ denote respectively the adjacency matrix and the degree matrix of $G$. Define
$$
A_{\alpha}(G)=\alpha D(G)+(1-\alpha)A(G)
$$
for any real $\alpha\in [0,1]$. The \emph{$A_{\alpha}$-characteristic polynomial} of $G$ is defined to be
$$
\det(xI_n-A_{\alpha}(G))=\sum_jc_{\alpha j}(G)x^{n-j},
$$
where $\det(*)$ denotes the determinant of $*$, and $I_n$ is the identity matrix of size $n$. The \emph{$A_{\alpha}$-spectrum} of $G$ consists of all roots of the $A_{\alpha}$-characteristic polynomial of $G$. A graph $G$ is said to be \emph{determined by its $A_{\alpha}$-spectrum} if all graphs having the same $A_{\alpha}$-spectrum as $G$ are isomorphic to $G$.

In this paper, we first formulate the first four coefficients $c_{\alpha 0}(G)$, $c_{\alpha 1}(G)$, $c_{\alpha 2}(G)$ and  $c_{\alpha 3}(G)$ of the $A_{\alpha}$-characteristic polynomial of $G$. And then, we observe that $A_{\alpha}$-spectra are much efficient for us to distinguish graphs, by enumerating the $A_{\alpha}$-characteristic polynomials for all graphs on at most 10 vertices. To verify this observation, we characterize some graphs determined by their $A_{\alpha}$-spectra.
\bigskip

\noindent\textbf{Keywords:} Adjacency matrix; Degree matrix; $A_{\alpha}$-characteristic polynomial; $A_{\alpha}$-spectrum; Determined by its $A_{\alpha}$-spectrum

\bigskip

\noindent{{\bf AMS Subject Classification (2010):} 05C50}
\end{abstract}


\section{Introduction}
Let $G=(V(G),E(G))$ be a graph with the vertex set $V(G)=\{v_1,v_2,\ldots,v_n\}$ and the edge set $E(G)=\{e_1,e_2,\ldots,e_m\}$. The \emph{adjacency matrix} of $G$, denoted by $A(G)=(a_{ij})_{n\times n}$, is an $n\times n$
symmetric matrix such that $a_{ij}=1$ if vertices $v_i$ and $v_j$
are adjacent and $0$ otherwise. Let $d_{i}=d(v_i)=d_G(v_i)$ be the degree of
vertex $v_i$ in $G$. The \emph{degree matrix} of $G$, denoted by $D(G)$, is
the diagonal matrix with diagonal entries the vertex degrees of $G$. The \emph{Laplacian matrix} and the \emph{signless Laplacian matrix} of $G$ are defined as
$L(G)=D(G)-A(G)$ and $Q(G)=D(G)+A(G)$, respectively.

In \cite{Nikiforov16},  Nikiforov proposed to study the following matrix:
$$
A_{\alpha}(G)=\alpha D(G)+(1-\alpha)A(G),
$$
where $\alpha\in [0,1]$ is a real number. Note that $A_0(G)=A(G)$ and $2A_{1/2}(G)=Q(G)$. So, it was  claimed in \cite{Nikiforov16, NikiPasten17} that the matrices $A_{\alpha}(G)$ can underpin a unified theory of $A(G)$ and $Q(G)$. Up until now, a few properties on $A_{\alpha}(G)$ have been investigated, including bounds on the $k$-th largest  (especially, the largest, the second largest and the smallest)  eigenvalue of $A_{\alpha}(G)$ \cite{Nikiforov16, NikiPasten17}, the positive semidefiniteness of $A_{\alpha}(G)$ \cite{Nikiforov16, NikiRojo17}, etc. For more properties on $A_{\alpha}(G)$, we refer the reader to \cite{Nikiforov16}.

Let $M$ be an $n \times n$ real matrix. Denote by
$$
\phi(M;x)=\det(xI_n-M),
$$
or simply $\phi(M)$, the \emph{characteristic polynomial} of $M$, where $I_n$ is the identity matrix of size $n$. In particular, we call $\phi(A_{\alpha}(G);x)$ (respectively, $\phi(A(G);x)$, $\phi(L(G);x)$, or $\phi(Q(G);x)$) the \emph{$A_{\alpha}$-characteristic} (respectively, \emph{$A$-characteristic}, \emph{$L$-characteristic}, or \emph{$Q$-characteristic}) \emph{polynomial} of a graph $G$.

Let $G$ be a graph on $n$ vertices, and let $t_G$ denote the number of triangles in $G$. Suppose that
$\phi(A(G);x)=\sum_ja_{j}(G)x^{n-j}$. By Sachs' Coefficient Theorem (see \cite[Corollary 2.3.3]{Cvetkovic10}; or \cite{Sachs64}), one can easily verify the following result.

\begin{prop}\label{Acoeff1}
Let $G$ be a graph with $n$ vertices and $m$ edges. Then the first four coefficients in $\phi(A(G);x)$ are
\begin{align*}
&a_0(G)=1, \qquad  a_1(G)=0, \qquad  a_2(G)=-m,  \qquad a_{3}(G)=-2t_G.
 \end{align*}
\end{prop}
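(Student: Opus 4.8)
The plan is to invoke Sachs' Coefficient Theorem, which the statement already points to, and simply read off the first four coefficients by enumerating the relevant subgraphs. Recall that the theorem expresses each coefficient as
$$
a_j(G) = \sum_{U} (-1)^{p(U)}\, 2^{c(U)},
$$
where the sum ranges over all \emph{basic figures} $U$ of $G$ on exactly $j$ vertices (a basic figure being a subgraph in which every connected component is a single edge $K_2$ or a cycle), and where $p(U)$ and $c(U)$ denote respectively the number of components and the number of cycles of $U$. With this in hand, the proof reduces to listing, for each $j \in \{0,1,2,3\}$, all possible basic figures on $j$ vertices and summing their contributions.

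First I would handle $j = 0$ and $j = 1$. For $j = 0$ the only basic figure is the empty subgraph, for which $p = c = 0$, so its contribution is $(-1)^0 2^0 = 1$; hence $a_0(G) = 1$. For $j = 1$ there is no basic figure at all, since a single vertex can be neither an edge nor a cycle; the empty sum gives $a_1(G) = 0$.

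Next I would treat $j = 2$ and $j = 3$, which carry the combinatorial content. A basic figure on $2$ vertices must consist of a single edge $K_2$, so such figures are in bijection with the $m$ edges of $G$; each has $p = 1$ and $c = 0$, contributing $(-1)^1 2^0 = -1$, whence $a_2(G) = -m$. A basic figure on $3$ vertices must cover all three vertices using components that are edges or cycles; since a single edge would leave an uncovered vertex that cannot itself form a component, the only admissible figure is a triangle $C_3$, so such figures are in bijection with the $t_G$ triangles of $G$. Each triangle has $p = 1$ and $c = 1$, contributing $(-1)^1 2^1 = -2$, and therefore $a_3(G) = -2t_G$.

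The computation is entirely routine once Sachs' theorem is granted, so there is no serious obstacle; the only point requiring care is the completeness of the enumeration for $j = 3$, namely verifying that no basic figure on three vertices can arise from a disjoint edge (which would force an isolated third vertex, not a legal component), so that triangles are indeed the sole contributors.
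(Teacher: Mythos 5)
Your proposal is correct and follows exactly the route the paper intends: the paper simply cites Sachs' Coefficient Theorem and leaves the verification to the reader, and your enumeration of the basic figures on $j=0,1,2,3$ vertices (including the key observation that only triangles can occur for $j=3$) is precisely that routine verification, carried out correctly.
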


Suppose that $\phi(L(G);x)=\sum_jl_{j}(G)x^{n-j}$. In \cite{Oliveira02}, Oliveira et al. formulated the first four coefficients $l_{0}(G)$, $l_{1}(G)$, $l_{2}(G)$ and  $l_{3}(G)$ of $\phi(L(G);x)$, stated as follows.

\begin{prop}\label{Lcoeff1}\emph{(see \cite{Oliveira02})}
Let $G$ be a graph with $n$ vertices and $m$ edges and let
$\mathrm{deg}(G)=(d_1,d_2,\ldots,d_n)$ be its degree
sequence. Then the first four coefficients in $\phi(L(G);x)$ are
\begin{align*}
&l_0(G)=1, \qquad  l_1(G)=-2m, \qquad  l_2(G)=2m^2-m-\frac{1}{2}\sum_{i=1}^nd_i^2,   \\
 &l_{3}(G)= \frac{1}{3}\left(-4m^3+6m^2+3m\sum_{i=1}^{n}d_i^2-\sum_{i=1}^nd_i^3-3\sum_{i=1}^nd_i^2+6t_G\right).
 \end{align*}
\end{prop}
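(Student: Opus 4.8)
The plan is to compute each coefficient $l_j(G)$ as a signed sum of principal minors of the Laplacian matrix $L(G)=D(G)-A(G)$. The starting point is the standard expansion
$$\det(xI_n-M)=\sum_{j=0}^n(-1)^je_j(M)\,x^{n-j},$$
where $e_j(M)$ denotes the sum of all $j\times j$ principal minors of $M$ (equivalently, the $j$-th elementary symmetric function of its eigenvalues). Matching this with $\phi(L(G);x)=\sum_jl_j(G)x^{n-j}$ gives $l_j(G)=(-1)^je_j(L(G))$, so the problem reduces to evaluating $e_0,e_1,e_2,e_3$ of $L(G)$ explicitly in terms of $m$, the degrees, and $t_G$.

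The cases $j=0,1$ are immediate: $e_0=1$ yields $l_0=1$, and $e_1=\tr L(G)=\sum_id_i=2m$ yields $l_1=-2m$. For $j=2$, each $2\times2$ principal minor on the index set $\{i,j\}$ equals $d_id_j-L_{ij}^2=d_id_j-a_{ij}$ (using $a_{ij}^2=a_{ij}$ for $0/1$ entries), so $e_2=\sum_{i<j}(d_id_j-a_{ij})=\sum_{i<j}d_id_j-m$. Applying the identity $\sum_{i<j}d_id_j=\tfrac12[(\sum_id_i)^2-\sum_id_i^2]$ together with $\sum_id_i=2m$ then gives $l_2=2m^2-m-\tfrac12\sum_id_i^2$.

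The substantive step is $j=3$. I would expand a generic $3\times3$ principal minor on $\{i,j,k\}$ to obtain
$$\det\begin{pmatrix} d_i & -a_{ij} & -a_{ik} \\ -a_{ij} & d_j & -a_{jk} \\ -a_{ik} & -a_{jk} & d_k\end{pmatrix}=d_id_jd_k-(d_ia_{jk}+d_ja_{ik}+d_ka_{ij})-2a_{ij}a_{ik}a_{jk},$$
again using $a^2=a$. Summing over all triples $i<j<k$ splits into three pieces. The first, $\sum_{i<j<k}d_id_jd_k$, is the third elementary symmetric function of the degree sequence, which Newton's identities rewrite as $\tfrac16(p_1^3-3p_1p_2+2p_3)$ with $p_r=\sum_id_i^r$. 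The third piece equals $2t_G$, since $a_{ij}a_{ik}a_{jk}=1$ exactly when $\{i,j,k\}$ spans a triangle. The middle piece is the one requiring care: I would reindex $\sum_{i<j<k}(d_ia_{jk}+d_ja_{ik}+d_ka_{ij})$ as a sum of $d_\ell$ over every edge $\{s,t\}\in E$ and every third vertex $\ell$, giving $\sum_{\{s,t\}\in E}(2m-d_s-d_t)=2m^2-\sum_id_i^2$, where the last equality uses $\sum_{\{s,t\}\in E}(d_s+d_t)=\sum_id_i^2$. Collecting the three pieces with $l_3=-e_3$ and substituting $p_1=2m$, $p_2=\sum_id_i^2$, $p_3=\sum_id_i^3$ reproduces the stated formula after routine simplification.

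I expect the main obstacle to be the bookkeeping in the $j=3$ case rather than any conceptual difficulty: the combinatorial re-indexing of the mixed degree--adjacency sum and the correct use of Newton's identity for the degree sequence are the delicate points, and the sign convention $l_j=(-1)^je_j$ must be tracked throughout so that the final collected expression matches the claimed coefficients.
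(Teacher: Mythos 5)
Your proposal is correct --- I verified each step, including the $3\times 3$ minor expansion, the reindexing of the mixed sum $\sum_{i<j<k}(d_ia_{jk}+d_ja_{ik}+d_ka_{ij})=2m^2-\sum_i d_i^2$, and the final assembly via $l_3=-e_3$ with Newton's identity $e_3=\tfrac16(p_1^3-3p_1p_2+2p_3)$; everything matches the stated coefficients. Note, however, that the paper does not prove this proposition at all: it is quoted from Oliveira et al.\ \cite{Oliveira02}, so there is no internal proof to compare against. The closest comparison is with the machinery the paper builds for its own analogue, Theorem \ref{Aalphacoeff1}: there the matrix is realized as the adjacency matrix of an edge-weighted graph with loops (edge weight $1-\alpha$, loop weight $\alpha d_j$ at vertex $v_j$), and the characteristic polynomial is expanded by the loop-removal recursion of Lemma \ref{PerWeight1} into characteristic polynomials of vertex-deleted weighted subgraphs, whose low-order coefficients come from Sachs' theorem via Lemma \ref{Mcoeff11}. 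Exactly the same template would prove the present proposition (take edge weight $-1$ and loop weight $d_j$, so that $A(\Gamma[d_1,\ldots,d_n])=D(G)-A(G)=L(G)$), and this is genuinely different bookkeeping from yours: the paper's route pushes all combinatorics into counts attached to vertex-deleted subgraphs ($a_2(\Gamma'_r)$ becomes ``edges not incident to $v_r$,'' etc.), whereas you work directly with signed sums of principal minors of $L(G)$ itself. Your approach is more elementary and self-contained (no weighted-graph formalism needed), while the paper's lemma-based approach has the advantage of handling the whole family $\alpha D+(1-\alpha)A$ uniformly, with the $L$- and $Q$-cases falling out as specializations rather than separate computations.
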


In this paper, we formulate the first four coefficients of the $A_{\alpha}$-characteristic polynomial of a graph $G$. We state our result as follows, where $t_G$ denotes the number of triangles in $G$.

\begin{thm}\label{Aalphacoeff1}
Let $G$ be a graph with $n$ vertices and $m$ edges, and let
$\mathrm{deg}(G)=(d_1,d_2,\dots ,d_n)$ be its degree
sequence. Suppose that $\phi(A_{\alpha}(G);x)=\sum_jc_{\alpha j}(G)x^{n-j}$. Then
 \begin{align*}
   & c_{\alpha 0}(G)=1,\qquad c_{\alpha 1}(G)=-2\alpha m,\qquad c_{\alpha 2}(G)=2\alpha^2m^2-(1-\alpha)^2m-\frac{1}{2}\alpha^2\sum_rd_r^2,\\
   & c_{\alpha 3}(G)=-\frac{1}{3}\left(6(1-\alpha)^3t_G-6\alpha(1-\alpha)^2m^2+3\alpha(1-\alpha)^2\sum_rd_r^2 +\alpha^3\left(4m^3-3m\sum_rd_r^2+\sum_rd_r^3\right)\right).
 \end{align*}
\end{thm}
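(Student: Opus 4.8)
The plan is to use the standard fact that for any real symmetric matrix $M$ of order $n$, one has $\det(xI_n-M)=\sum_j(-1)^jS_j(M)\,x^{n-j}$, where $S_j(M)$ denotes the sum of all $j\times j$ principal minors of $M$; equivalently $c_{\alpha j}(G)=(-1)^jS_j(A_{\alpha}(G))$. Since $A_{\alpha}(G)$ has diagonal entries $\alpha d_i$ and off-diagonal entries $(1-\alpha)a_{ij}$, each principal minor is an explicit polynomial in $\alpha$, in the degrees, and in the adjacency entries, so the whole computation reduces to summing these minors over all index subsets of sizes $0,1,2,3$ and re-expressing the results through the graph invariants $m$, $\sum_rd_r^2$, $\sum_rd_r^3$ and $t_G$.

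First, $S_0=1$ gives $c_{\alpha0}(G)=1$, and $S_1=\tr A_{\alpha}(G)=\alpha\sum_id_i=2\alpha m$ gives $c_{\alpha1}(G)=-2\alpha m$. Next I would evaluate each $2\times2$ principal minor indexed by $\{i,j\}$ as $\alpha^2d_id_j-(1-\alpha)^2a_{ij}^2$ and sum over all pairs. Using $\sum_{i<j}a_{ij}^2=\sum_{i<j}a_{ij}=m$ together with the identity $\sum_{i<j}d_id_j=\tfrac12\bigl((\sum_id_i)^2-\sum_id_i^2\bigr)=2m^2-\tfrac12\sum_rd_r^2$, one obtains $S_2$ and hence $c_{\alpha2}(G)$ in the stated form.

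The heart of the argument is $S_3$. I would expand the $3\times3$ principal minor on the index set $\{i,j,k\}$ as
\[
\alpha^3d_id_jd_k+2(1-\alpha)^3a_{ij}a_{ik}a_{jk}-\alpha(1-\alpha)^2\left(d_ia_{jk}^2+d_ja_{ik}^2+d_ka_{ij}^2\right),
\]
and then sum each of the three resulting terms separately. The first sum is $\alpha^3e_3(d_1,\dots,d_n)$, which I would convert to power sums via Newton's identities, giving $\tfrac{\alpha^3}{3}\bigl(4m^3-3m\sum_rd_r^2+\sum_rd_r^3\bigr)$. The second sum is immediate, since $\sum_{i<j<k}a_{ij}a_{ik}a_{jk}=t_G$: the product equals $1$ exactly when $\{i,j,k\}$ spans a triangle.

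The step that needs the most care is the mixed term $\sum_{i<j<k}(d_ia_{jk}+d_ja_{ik}+d_ka_{ij})$, where I have used $a_{ij}^2=a_{ij}$. I would evaluate it by a double-counting argument: each summand pairs an edge with a third ``apex'' vertex not incident to it, so the total equals $\sum_{\{u,v\}\in E(G)}\sum_{w\neq u,v}d_w=\sum_{\{u,v\}\in E(G)}(2m-d_u-d_v)$. Here $\sum_{\{u,v\}\in E(G)}(d_u+d_v)=\sum_rd_r^2$, because each vertex $r$ contributes $d_r$ once for each of its $d_r$ incident edges, so the mixed sum equals $2m^2-\sum_rd_r^2$. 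Collecting the three contributions gives $S_3$, and $c_{\alpha3}(G)=-S_3$ yields precisely the claimed expression after factoring out $\tfrac13$. The only genuine obstacle is keeping the bookkeeping of this mixed term correct; everything else is a direct symmetric-function computation.
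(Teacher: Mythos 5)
Your proof is correct, but it takes a genuinely different route from the paper's. The paper never touches principal minors of $A_{\alpha}(G)$ directly: it regards $A_{\alpha}(G)$ as the adjacency matrix of the edge-weighted graph $\Gamma'$ (each edge of $G$ weighted $1-\alpha$) with a loop of weight $\alpha d_j$ added at each vertex $v_j$, and its key tool, Lemma \ref{PerWeight1}, expands the characteristic polynomial multilinearly over those loops, expressing $\phi(A_{\alpha}(G);x)$ as an alternating sum, over vertex subsets, of degree products times the characteristic polynomials of the vertex-deleted weighted subgraphs $\Gamma'_{r_1,\ldots,r_k}$; the four coefficients are then read off from the scaled Sachs coefficients of those weighted adjacency matrices (Proposition \ref{Acoeff1} combined with Lemma \ref{Mcoeff11}). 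Your route instead uses $c_{\alpha j}(G)=(-1)^jS_j(A_{\alpha}(G))$ and evaluates the $2\times 2$ and $3\times 3$ principal minors explicitly; all of your steps check out: the $3\times 3$ determinant expansion, the Newton-identity evaluation of $\sum_{i<j<k}d_id_jd_k=\frac{1}{3}\left(4m^3-3m\sum_rd_r^2+\sum_rd_r^3\right)$, the identification $\sum_{i<j<k}a_{ij}a_{ik}a_{jk}=t_G$, the double count giving $2m^2-\sum_rd_r^2$ for the mixed term, and the final sign $c_{\alpha 3}(G)=-S_3$. In fact your mixed term is the paper's term $-\alpha\sum_r d_r\,a_2(\Gamma'_r)=\alpha(1-\alpha)^2\sum_r d_r(m-d_r)$ in different clothing, since the deleted subgraph $\Gamma'_r$ has $m-d_r$ edges and $\sum_r d_r(m-d_r)=2m^2-\sum_r d_r^2$. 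What your approach buys is self-containedness: no weighted-graph formalism, no loop-expansion lemma, no appeal to Sachs' theorem --- only linear algebra and elementary counting. What the paper's approach buys is reusability: Lemma \ref{PerWeight1} is an identity for the whole polynomial, valid for every coefficient and arbitrary loop weights, so it reduces any $c_{\alpha j}(G)$ to adjacency coefficients of induced subgraphs and recycles the already-known Proposition \ref{Acoeff1} rather than redoing minor computations from scratch.
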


Let $M$ be an $n\times n$ real symmetric matrix. Denote the eigenvalues of $M$ by $\lambda_1(M)\geq\lambda_2(M)\geq\cdots\geq\lambda_n(M)$.
The collection of eigenvalues of $M$ together with multiplicities are called the \emph{spectrum} of $M$. In particular, the spectrum of $A(G)$ (respectively, $L(G)$, $Q(G)$, $A_{\alpha}(G)$) is called the \emph{$A$-spectrum} (respectively, \emph{$L$-spectrum}, \emph{$Q$-spectrum}, \emph{$A_{\alpha}$-spectrum}) of $G$. Two graphs are said to be \emph{$A$-cospectral} if they have the same $A$-spectrum (equivalently, the same $A$-characteristic polynomial). A graph is called an \emph{$A$-DS graph} if it is \emph{determined by its $A$-spectrum}, meaning that there exists no other graph that is non-isomorphic to it but $A$-cospectral with it. Similar terminology will be used for $L(G)$, $Q(G)$ and $A_{\alpha}(G)$. So we can speak of \emph{$L$-cospectral graphs}, \emph{$Q$-cospectral graphs}, \emph{$A_{\alpha}$-cospectral graphs},  \emph{$L$-DS graphs},  \emph{$Q$-DS graphs} and \emph{$A_{\alpha}$-DS graphs}.

Characterizing which graphs are determined by their spectra is a classical but difficult problem in spectral graph theory which was raised by G\"{u}nthard and Primas \cite{GunthardP56} in 1956 with motivations from chemistry. Up until now, although many graphs have been proved to be DS graphs (see \cite{DamH03,DamH09}), the problem of determining DS  graphs is still far from being completely solved. Therefore, finding new families of DS graphs deserves further attention in order to enrich our database of DS graphs. Unfortunately, even for some simple-looking graphs, it is often challenging to determine whether they are DS or not.

 Let $J$ denote the matrix with all entries equal to one. In \cite[Concluding remarks]{DamH03}, van Dam and Haemers proposed to solve the following problem:

\begin{prob}\label{prob:1}
Which linear combination of $D(G)$, $A(G)$, and $J$ gives the most DS graphs?
\end{prob}

From \cite[Table 1]{DamH03}, van Dam and Haemers claimed that the signless Laplacian matrix $Q(G)=D(G)+A(G)$ would be a good candidate. Since then, a lot of researchers tried to confirm this claim. However, it seems to be negative, since it is so difficult to check whether a graph is DS or not.

In this paper, by enumerating the $A_{\alpha}$-characteristic polynomials for all graphs on at most 10 vertices, we observe that $A_{\alpha}$-spectra are much more efficient than $A$-spectra and $Q$-spectra when we use them to distinguish graphs. To verify this observation, we characterize some graphs determined by their $A_{\alpha}$-spectra.

The paper is organized as follows. In Section \ref{Sec:2}, we give some basic results which will be used to prove Theorem \ref{Aalphacoeff1}. In Section \ref{Sec:3}, we prove Theorem \ref{Aalphacoeff1}, and we also compute
the first four coefficients of $A_{\alpha}$-characteristic polynomials of some specific graphs as examples. In Section \ref{Sec:DS}, we enumerate the $A_{\alpha}$-characteristic polynomials for all graphs on at most 10 vertices, and we also characterize some graphs determined by their $A_{\alpha}$-spectra.

\section{Preliminaries}\label{Sec:2}
In this section, we mention some results, which will play an important role in the proof of Theorem \ref{Aalphacoeff1}.

Let $G$ be a graph with the vertex set $V(G)=\{v_1,v_2,\ldots,v_n\}$ and the edge set $E(G)=\{e_1,e_2,\ldots,e_m\}$. Define $\Gamma=(G,w)$ to be an edge-weighted graph obtained from $G$ by assigning each edge $e_i$ of $G$ with a non-zero weight $w(e_i)$. Usually, $w$ is called an edge-weight function of $\Gamma$. The adjacency matrix $A(\Gamma)$ of the edge-weighted graph $\Gamma=(G,w)$ is defined as the $n\times n$ matrix $A(\Gamma) = (a_{ij})$ with
$$
a_{ij}=\left\{\begin{array}{rl}
               w(e)\not=0,  &  \text{if $v_iv_j$ is an edge $e$ of $G$,} \\[0.2cm]
               0,                    &  \text{otherwise.}
             \end{array}
\right.
$$

Suppose that $v_{r_1},v_{r_2},\ldots,v_{r_k}$ are $k$ distinct vertices of $\Gamma$. Denote by $\Gamma_{r_1,r_2,\ldots,r_k}$ the edge-weighted subgraph obtained from $\Gamma$ by deleting these $k$ vertices, together with all weighted edges incident to these vertices, from $\Gamma$. Denote by $\Gamma[h_{r_1},h_{r_2},\ldots,h_{r_k}]$ the weighted graph obtained from $\Gamma$ by adding a loop of weight $h_{r_j}$ to each vertex $v_{r_j}$ for $j=1, 2,\ldots, k$.

It is well known that if one column of a matrix $M$ is written as a sum $u+v$ of two column vectors, and all other columns are left unchanged, then the determinant of $M$ is the sum of the determinants of the matrices obtained from $M$ by replacing the column by $u$ and then by $v$. By applying this fact to $\phi(A(\Gamma[h_{r_1}]);x)$, we have the following recursion relation
$$
\phi(A(\Gamma[h_{r_1}]);x)=\phi(A(\Gamma);x)-h_{r_1}\phi(A(\Gamma_{r_1});x).
$$
By the above equation, we have
 \begin{align*}
 \phi(A(\Gamma[h_{r_1}, h_{r_2}]);x) & =\phi(A(\Gamma[h_{r_1}]);x) - h_{r_2}\phi(A(\Gamma_{r_2}[h_{r_1}]);x)  \\
   & =\Big(\phi(A(\Gamma);x)-h_{r_1}\phi(A(\Gamma_{r_1});x)\Big) -h_{r_2}\Big(\phi(A(\Gamma_{r_2});x)-h_{r_1}\phi(A(\Gamma_{r_1,r_2});x)\Big)\\
   &=\phi(A(\Gamma);x)-h_{r_1}\phi(A(\Gamma_{r_1});x) -h_{r_2}\phi(A(\Gamma_{r_2});x)+h_{r_1}h_{r_2}\phi(A(\Gamma_{r_1,r_2});x).
   \end{align*}
More generally, one can verify the following result.

\begin{lem}\label{PerWeight1}
Let $\Gamma=(G,w)$ be an edge-weighted graph with $n$ vertices, and $\Gamma[h_{1},h_{2},\ldots,h_{n}]$ the graph obtained from $\Gamma$ by adding a loop of weight $h_{j}$ to each vertex $v_{j}$ for $j=1, 2,\ldots, n$. Then
$$
\phi(A(\Gamma[h_1,h_2,\ldots,h_n]);x)=\phi(A(\Gamma);x)+\sum_{k=1}^n(-1)^k\sum_{1\le r_1<r_2<\cdots<r_k\le n}h_{r_1}h_{r_2}\cdots h_{r_k}\phi(A(\Gamma_{r_1,r_2,\ldots,r_k});x).
$$
\end{lem}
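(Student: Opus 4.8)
The plan is to prove the identity by exploiting the multilinearity of the determinant in its columns, which converts the single-loop recursion displayed above into a closed-form expansion in one stroke. First I would record the matrix identity underlying the construction: since $\Gamma$ carries no loops, its adjacency matrix $A(\Gamma)$ has zero diagonal, and adding a loop of weight $h_j$ at $v_j$ simply places $h_j$ in the $(j,j)$ entry. Hence $A(\Gamma[h_1,\dots,h_n]) = A(\Gamma) + H$ with $H = \diag(h_1,\dots,h_n)$, so that
$$
\phi(A(\Gamma[h_1,\dots,h_n]);x) = \det(B - H), \qquad B := xI_n - A(\Gamma).
$$

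Next I would expand $\det(B-H)$ using the fact, already invoked in the excerpt, that the determinant is linear in each column. The $j$-th column of $B - H$ is $B_j - h_j e_j$, where $B_j$ is the $j$-th column of $B$ and $e_j$ is the $j$-th standard basis vector. Expanding all $n$ columns simultaneously produces a sum over subsets $S \subseteq \{1,\dots,n\}$, where for $j \in S$ we select the term $-h_j e_j$ and for $j \notin S$ we keep $B_j$. The contribution of $S$ is $\left(\prod_{j \in S}(-h_j)\right)$ times the determinant of the matrix whose columns indexed by $S$ are the vectors $e_j$ and whose remaining columns agree with $B$.

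The key computational step is to identify this determinant. Performing Laplace (cofactor) expansion successively along each column $e_j$ with $j \in S$: such a column has its single nonzero entry, equal to $1$, in row $j$, and the associated cofactor sign is $(-1)^{j+j}=+1$; each expansion therefore deletes row $j$ and column $j$ and contributes a factor $1$. After exhausting $S$, what remains is exactly the principal submatrix of $B$ on the index set $\{1,\dots,n\}\setminus S$, namely $\det\!\big((xI_n - A(\Gamma))[\bar S]\big) = \det\!\big(xI_{n-|S|} - A(\Gamma_S)\big) = \phi(A(\Gamma_S);x)$, since deleting the rows and columns indexed by $S$ from $A(\Gamma)$ yields the adjacency matrix of the vertex-deleted graph $\Gamma_S$. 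Summing over all $S$ and grouping by $k = |S|$ (with the empty set $S = \emptyset$ giving the leading term $\phi(A(\Gamma);x)$) yields precisely the stated formula.

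The step I expect to require the most care is the sign accounting in the iterated Laplace expansion: one must check that deleting the columns of $S$ one at a time always introduces cofactor sign $+1$, so that no stray signs survive beyond the explicit factor $\prod_{j\in S}(-h_j)$. A clean way to see this is to permute the rows and columns so that the indices in $S$ come first; the matrix then becomes block upper triangular with an identity block of size $|S|$ and the principal submatrix on $\bar S$ as the other diagonal block, making the determinant equal to that principal minor directly. Alternatively, the whole identity can be proved by induction on $n$ by applying the single-loop recursion to strip off the loop at $v_n$ and invoking the induction hypothesis on the two resulting graphs $\Gamma[h_1,\dots,h_{n-1}]$ and $\Gamma_n[h_1,\dots,h_{n-1}]$; there the main nuisance is the bookkeeping needed to merge the subsets of $\{1,\dots,n-1\}$ that do and do not acquire the index $n$ into the single subset-sum over $\{1,\dots,n\}$.
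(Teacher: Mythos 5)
Your proof is correct, and it completes the argument by a route that differs in organization from the paper's. The paper never expands all columns at once: it uses column-linearity to establish the single-loop recursion $\phi(A(\Gamma[h_{r_1}]);x)=\phi(A(\Gamma);x)-h_{r_1}\phi(A(\Gamma_{r_1});x)$, applies it twice to get the two-loop identity explicitly, and then asserts the general formula ("more generally, one can verify"), i.e.\ its implicit proof is an iteration/induction on the number of loops --- exactly the alternative you sketch in your final sentences. Your main argument instead writes $A(\Gamma[h_1,\ldots,h_n])=A(\Gamma)+\mathrm{diag}(h_1,\ldots,h_n)$ and expands $\det\bigl(B-H\bigr)$ multilinearly in all $n$ columns simultaneously, reducing everything to the identification $\det(M_S)=\phi(A(\Gamma_S);x)$, which you justify correctly: the permutation putting $S$ first leaves the determinant unchanged (rows and columns are permuted by the same permutation) and produces a block upper triangular matrix with an identity block, so the determinant is the principal minor of $B$ on $\bar S$. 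What this buys is a genuine one-stroke closed-form proof with no induction and no bookkeeping about merging subset sums, at the price of the sign/minor analysis you flagged; the paper's incremental recursion keeps each step trivial but, as written, leaves the general case unproved, so your version is actually the more complete of the two.
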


\begin{lem}\label{Mcoeff11}
Let $M$ be an $n\times n$ real matrix. Suppose that
$\phi(M;x)=\sum_jm_{j}(M)x^{n-j}$ is a monic polynomial. Then $\phi(kM;x)=\sum_jk^jm_{j}(M)x^{n-j}$, where $k$ is a real number.
\end{lem}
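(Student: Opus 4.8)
The plan is to reduce everything to the standard interpretation of the coefficients $m_j(M)$ as signed sums of principal minors of $M$. Recall that for any $n\times n$ matrix $M$ one has the expansion $\det(xI_n-M)=\sum_{j=0}^{n}(-1)^jE_j(M)x^{n-j}$, where $E_j(M)$ denotes the sum of all $j\times j$ principal minors of $M$ (with the convention $E_0(M)=1$). Comparing with the hypothesis $\phi(M;x)=\sum_jm_j(M)x^{n-j}$ identifies $m_j(M)=(-1)^jE_j(M)$. First I would establish (or simply cite) this expansion, since it is the only structural input the argument needs.

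The key step is then to track how the principal minors transform under the scaling $M\mapsto kM$. Fix a $j$-element index set $\{i_1<\cdots<i_j\}$ and let $M'$ be the corresponding $j\times j$ principal submatrix of $M$. Every entry of the analogous principal submatrix of $kM$ equals $k$ times the corresponding entry of $M'$, so by multilinearity of the determinant in its $j$ rows (equivalently, because a $j\times j$ determinant is a homogeneous polynomial of degree $j$ in the matrix entries) we get $\det(kM')=k^{j}\det(M')$. Summing over all $j$-element index sets yields $E_j(kM)=k^{j}E_j(M)$ for every $j$.

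Combining the two observations gives $m_j(kM)=(-1)^jE_j(kM)=(-1)^jk^jE_j(M)=k^jm_j(M)$, and hence $\phi(kM;x)=\sum_jm_j(kM)x^{n-j}=\sum_jk^jm_j(M)x^{n-j}$, which is exactly the claim. I do not expect a genuine obstacle here, as the result is a direct consequence of the homogeneity of minors; the only point deserving a line of care is the boundary value $k=0$, but this causes no trouble since the displayed identity is polynomial in $k$ and the normalization $E_0(M)=1$ forces both sides to reduce to $x^n$ there.

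As an alternative that avoids the minor bookkeeping, for $k\neq 0$ one can argue directly by the variable substitution $\det(xI_n-kM)=\det\!\big(k((x/k)I_n-M)\big)=k^n\phi(M;x/k)$ and then expand $\phi(M;x/k)=\sum_jm_j(M)(x/k)^{n-j}$, so that the factor $k^n$ combines with $k^{-(n-j)}$ to give $k^j$; the case $k=0$ follows by continuity. I would likely present the principal-minor argument as the main proof, since it handles all real $k$ uniformly in one stroke.
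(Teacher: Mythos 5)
Your proof is correct, but it takes a different route from the paper. The paper's own argument factors the characteristic polynomial over the eigenvalues: writing $\phi(M;x)=\prod_{i=1}^n(x-\lambda_i)$ and $\phi(kM;x)=\prod_{i=1}^n(x-k\lambda_i)$, it expands both products so that the coefficient of $x^{n-j}$ is (up to sign) the $j$-th elementary symmetric function of the eigenvalues, which visibly picks up a factor $k^j$ under $\lambda_i\mapsto k\lambda_i$; comparing coefficients finishes the proof. You instead identify $m_j(M)=(-1)^jE_j(M)$ via the principal-minor expansion of $\det(xI_n-M)$ and use homogeneity of $j\times j$ minors, $E_j(kM)=k^jE_j(M)$. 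Both hinge on the same underlying fact (degree-$j$ homogeneity of the $j$-th coefficient), but the inputs differ: your version never invokes eigenvalues, so it sidesteps the (harmless but implicit) passage to $\mathbb{C}$ that the paper's proof needs, since $M$ is only assumed real and square, not symmetric; the paper's version is slightly shorter because the Vieta-type expansion is standard and requires no bookkeeping over index sets. Your substitution alternative, $\det(xI_n-kM)=k^n\phi(M;x/k)$ for $k\neq 0$ plus a polynomial-identity argument at $k=0$, is a third valid route that appears in neither form in the paper. All three arguments are sound, and your choice to feature the minor-based one, which treats every real $k$ uniformly, is reasonable.
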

\begin{proof}
Suppose that $\lambda_1, \lambda_2, \ldots, \lambda_n$ are eigenvalues of $M$.  Then
\begin{align*}
\phi(M;x) &=\det(xI_n-M)\\
                       &=\prod_{i=1}^n(x-\lambda_i)\\
                       &=x^n-\sum_i \lambda_i x^{n-1}+\sum_{i<j}\lambda_i\lambda_jx^{n-2}-\sum_{i<j<k}\lambda_i\lambda_j\lambda_kx^{n-3}+\cdots,
 \end{align*}
and
\begin{align*}
\phi(kM;x) &=\det(xI_n-kM)\\
                       &=\prod_{i=1}^n(x-k\lambda_i)\\
                       &=x^n-k\sum_i \lambda_i x^{n-1}+k^2\sum_{i<j}\lambda_i\lambda_jx^{n-2}-k^3\sum_{i<j<k}\lambda_i\lambda_j\lambda_kx^{n-3}+\cdots.
 \end{align*}
By comparing the above equations, we obtain the required result.
\qed\end{proof}

Let $G$ be a graph with $n$ vertices and $m$ edges.  Recall that $t_G$ denotes the number of triangles in $G$. Define $\Gamma'$ to be an edge-weighted graph obtained from $G$ by assigning each edge $e_i$ of $G$ with a non-zero weight $1-\alpha$. Clearly, $A(\Gamma')=(1-\alpha)A(G)$. By Lemma \ref{Mcoeff11} and Proposition \ref{Acoeff1}, we have the following result immediately.

\begin{lem}\label{AWcoeff1}
Let $G$ and $\Gamma'$ be as above. Suppose that
$\phi(A(\Gamma');x)=\sum_ja_{j}(\Gamma')x^{n-j}$. Then the first four coefficients in $\phi(A(\Gamma');x)$ are
\begin{align*}
&a_0(\Gamma')=1, \qquad  a_1(\Gamma')=0, \qquad  a_2(\Gamma')=-(1-\alpha)^2m,  \qquad a_{3}(\Gamma')=-2(1-\alpha)^3t_G.
 \end{align*}

\end{lem}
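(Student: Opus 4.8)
The plan is to recognize that $A(\Gamma') = (1-\alpha)A(G)$ is simply a scalar multiple of the ordinary adjacency matrix $A(G)$, and therefore to read off the first four coefficients of $\phi(A(\Gamma');x)$ directly from those of $\phi(A(G);x)$ via the scaling behavior established in Lemma \ref{Mcoeff11}. Since the hypotheses already record that $A(\Gamma')=(1-\alpha)A(G)$, the entire statement is meant to be a one-line consequence of the two cited results.

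First I would set $M = A(G)$ and $k = 1-\alpha$, so that $kM = (1-\alpha)A(G) = A(\Gamma')$. By Proposition \ref{Acoeff1}, the first four coefficients of $\phi(M;x)=\phi(A(G);x)$ are $a_0(G)=1$, $a_1(G)=0$, $a_2(G)=-m$, and $a_3(G)=-2t_G$. Applying Lemma \ref{Mcoeff11} then yields $a_j(\Gamma') = (1-\alpha)^j a_j(G)$ for every index $j$.

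Next I would simply specialize to $j=0,1,2,3$. This gives $a_0(\Gamma')=1$, $a_1(\Gamma')=(1-\alpha)\cdot 0 = 0$, $a_2(\Gamma') = (1-\alpha)^2(-m) = -(1-\alpha)^2 m$, and $a_3(\Gamma') = (1-\alpha)^3(-2t_G) = -2(1-\alpha)^3 t_G$, which are exactly the four claimed values.

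There is essentially no obstacle in this argument, as the lemma is a direct corollary of Lemma \ref{Mcoeff11} and Proposition \ref{Acoeff1}. The only point requiring care is the identification $A(\Gamma') = (1-\alpha)A(G)$: this holds because every edge of $G$ carries the \emph{constant} weight $1-\alpha$, so each nonzero entry of $A(G)$ is multiplied by the same factor $1-\alpha$ and the matrix scales uniformly. Once this observation is in hand, the scaling lemma does all the remaining work and the coefficients follow immediately.
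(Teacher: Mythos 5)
Your proposal is correct and follows exactly the paper's route: the paper likewise observes that $A(\Gamma')=(1-\alpha)A(G)$ and derives the lemma immediately from Lemma \ref{Mcoeff11} applied with $k=1-\alpha$ together with the coefficients in Proposition \ref{Acoeff1}. Your write-up merely makes explicit the substitution $a_j(\Gamma')=(1-\alpha)^j a_j(G)$ that the paper leaves implicit.
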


\section{Coefficients of the $A_{\alpha}$-characteristic polynomial of a graph}\label{Sec:3}
\subsection{Proof of Theorem \ref{Aalphacoeff1}}
Let $G$ be a graph with $n$ vertices and $m$ edges, and let
$\mathrm{deg}(G)=(d_1,d_2,\dots ,d_n)$ be its degree
sequence. Recall that $\Gamma'$ denotes an edge-weighted graph obtained from $G$ by assigning each edge $e_i$ of $G$ with a non-zero weight $1-\alpha$. Then by Lemma \ref{PerWeight1}, we have
\begin{align}
  \phi(A_{\alpha}(G);x) &=\det(xI_n-A_{\alpha}(G)) \nonumber\\
  &=\det(xI_n-A(\Gamma'[\alpha d_1,\alpha d_2,\ldots,\alpha d_n])) \nonumber\\
  &=\phi(A(\Gamma');x)+\sum_{k=1}^n(-1)^k\alpha^k\sum_{1\le r_1<r_2<\cdots<r_k\le n}d_{r_1}d_{r_2}\cdots d_{r_k}\phi(A(\Gamma'_{r_1,r_2,\ldots,r_k});x). \label{Q-per-Pol1}
\end{align}

Recall that
$\phi(A(\Gamma');x)=\sum_ja_{j}(\Gamma')x^{n-j}$ and $\phi(A_{\alpha}(G);x)=\sum_jc_{\alpha j}(G)x^{n-j}$. By Equation (\ref{Q-per-Pol1}), we have
\begin{equation}\label{Q-Coeffi-1}
c_{\alpha j}(G)= a_j(\Gamma')+\sum_{k=1}^n(-1)^k\alpha^k\sum_{1\le r_1<r_2<\cdots<r_k\le n}d_{r_1}d_{r_2}\cdots d_{r_k}a_{j-k}(\Gamma'_{r_1,r_2,\ldots,r_k}).
\end{equation}

\begin{Tproof}\textbf{of Theorem \ref{Aalphacoeff1}.}~~
By Lemma \ref{AWcoeff1}, we have $a_0(\Gamma')=1, a_1(\Gamma')=0, a_2(\Gamma')=-(1-\alpha)^2m$ and $a_{3}(\Gamma')=-2(1-\alpha)^3t_G$. Note that $\sum_rd_r=2m$. By Equation (\ref{Q-Coeffi-1}), we have
\begin{align*}
  c_{\alpha 0}(G) &=a_0(\Gamma')=1;\\
  c_{\alpha 1}(G) & =a_1(\Gamma')-\alpha\sum_rd_ra_0(\Gamma'_r)=0-\alpha\sum_rd_r=-2\alpha m;\\
  c_{\alpha 2}(G)& =a_2(\Gamma')-\alpha\sum_rd_ra_1(\Gamma'_r) +\alpha^2\sum_{r<s}d_rd_sa_0(\Gamma'_{r,s})\\
              &=-(1-\alpha)^2m-0+\alpha^2\sum_{r<s}d_rd_s\\
              &=-(1-\alpha)^2m+\frac{1}{2}\alpha^2\left(\sum_r\sum_sd_rd_s-\sum_rd_r^2\right)\\
              &=-(1-\alpha)^2m+\frac{1}{2}\alpha^2(2m)^2-\frac{1}{2}\alpha^2\sum_rd_r^2\\
              &=2\alpha^2m^2-(1-\alpha)^2m-\frac{1}{2}\alpha^2\sum_rd_r^2;\\
  c_{\alpha 3}(G)& =a_3(\Gamma')-\alpha\sum_rd_ra_2(\Gamma'_r)+\alpha^2\sum_{r<s}d_rd_sa_1(\Gamma'_{r,s}) -\alpha^3\sum_{r<s<t}d_rd_sd_ta_0(\Gamma'_{r,s,t})\\          &=-2(1-\alpha)^3t_G+\alpha(1-\alpha)^2\sum_r d_r(m-d_r)-\alpha^3\sum_{r<s<t}d_rd_sd_t\\
  &=-2(1-\alpha)^3t_G+\alpha(1-\alpha)^2m\sum_rd_r-\alpha(1-\alpha)^2\sum_rd_r^2\\ &\quad-\frac{1}{6}\alpha^3\left(\sum_r\sum_s\sum_td_rd_sd_t-3\sum_r\sum_{\substack{s \\ s\not=r}}d_rd_s^2-\sum_rd_r^3 \right)\\
  &=-2(1-\alpha)^3t_G+\alpha(1-\alpha)^2m\sum_rd_r-\alpha(1-\alpha)^2\sum_rd_r^2\\ &\quad-\frac{1}{6}\alpha^3\left(\sum_r\sum_s\sum_td_rd_sd_t-3\sum_r\sum_sd_rd_s^2+2\sum_rd_r^3 \right)\\
  &=-2(1-\alpha)^3t_G+2\alpha(1-\alpha)^2m^2-\alpha(1-\alpha)^2\sum_rd_r^2 -\frac{1}{3}\alpha^3\left(4m^3-3m\sum_sd_s^2+\sum_rd_r^3\right)\\
  &=-\frac{1}{3}\left(6(1-\alpha)^3t_G-6\alpha(1-\alpha)^2m^2+3\alpha(1-\alpha)^2\sum_rd_r^2 +\alpha^3\left(4m^3-3m\sum_rd_r^2+\sum_rd_r^3\right)\right).
\end{align*}
This completes the proof. \qed\end{Tproof}

As a corollary of Theorem \ref{Aalphacoeff1}, by setting $\alpha=\frac{1}{2}$ and then applying Lemma \ref{Mcoeff11}, we immediately obtain the following result.

\begin{cor}\label{Qcoeff1}
Let $G$ be a graph with $n$ vertices and $m$ edges, and let
$\mathrm{deg}(G)=(d_1,d_2,\dots ,d_n)$ be its degree
sequence. Suppose that $\phi(Q(G);x)=\sum_jq_{j}(G)x^{n-j}$. Then
 \begin{align*}
   & q_{0}(G)=1,\qquad q_{1}(G)=-2m,\qquad q_{2}(G)=2m^2-m-\frac{1}{2}\sum_{r}d_r^2,\\
   & q_{3}(G)= -\frac{1}{3}\left(6t_G-6m^2+4m^3+3(1-m)\sum_rd_r^2+\sum_rd_r^3\right).
 \end{align*}
\end{cor}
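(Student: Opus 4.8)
The plan is to exploit the relation $Q(G)=2A_{1/2}(G)$, already recorded in the introduction, so that the coefficients $q_j(G)$ can be obtained from those of $\phi(A_{1/2}(G);x)$ through the scaling behaviour described in Lemma \ref{Mcoeff11}. In other words, I would not re-derive anything from scratch: the two ingredients are Theorem \ref{Aalphacoeff1} specialized at $\alpha=\tfrac12$, and a single application of the scaling lemma with factor $k=2$.

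First I would substitute $\alpha=\tfrac12$ into the four formulas of Theorem \ref{Aalphacoeff1} and collect the resulting powers of $\tfrac12$. Writing $c_{\alpha j}(G)$ evaluated at $\alpha=\tfrac12$, one finds $c_{\alpha 0}=1$ and $c_{\alpha 1}=-m$, while
$$
c_{\alpha 2}=\tfrac12 m^2-\tfrac14 m-\tfrac18\sum_r d_r^2,\qquad
c_{\alpha 3}=-\tfrac{1}{24}\Big(6t_G-6m^2+4m^3+3(1-m)\sum_r d_r^2+\sum_r d_r^3\Big).
$$
In the last expression I would fuse the two monomials $3\sum_r d_r^2$ and $-3m\sum_r d_r^2$, coming respectively from the $3\alpha(1-\alpha)^2\sum_r d_r^2$ and $-3\alpha^3 m\sum_r d_r^2$ contributions (each carrying a factor $\tfrac18$ at $\alpha=\tfrac12$), into the single term $3(1-m)\sum_r d_r^2$.

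Next, since $Q(G)=2A_{1/2}(G)$, Lemma \ref{Mcoeff11} applied with $k=2$ and $M=A_{1/2}(G)$ gives $q_j(G)=2^{\,j}\,c_{\alpha j}(G)\big|_{\alpha=1/2}$ for each $j$. Multiplying the four coefficients above by $2^0,2^1,2^2,2^3$ respectively then yields exactly the claimed formulas: the factor $2^2=4$ clears the denominators in $c_{\alpha 2}$ to give $2m^2-m-\tfrac12\sum_r d_r^2$, and the factor $2^3=8$ against the prefactor $-\tfrac1{24}$ collapses to $-\tfrac13$, producing the stated $q_3(G)$.

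The computation is entirely routine, so there is no genuine obstacle; the only point demanding care is the bookkeeping of the rational coefficients, namely tracking the power of $\tfrac12$ introduced by each monomial in $\alpha$ and matching it against the compensating power of $2$ supplied by the scaling step, and correctly recognising which two terms combine into $3(1-m)\sum_r d_r^2$.
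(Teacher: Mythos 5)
Your proposal is correct and follows essentially the same route as the paper: substitute $\alpha=\tfrac12$ into Theorem \ref{Aalphacoeff1}, then use $Q(G)=2A_{1/2}(G)$ together with Lemma \ref{Mcoeff11} (with $k=2$) to rescale the coefficients by $2^j$. The only cosmetic difference is that you pull the common factor $\tfrac18$ out of $c_{\alpha 3}$ before scaling, whereas the paper keeps it inside and simplifies after multiplying by $8$; the arithmetic is identical.
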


\begin{proof}
Let $\alpha=\frac{1}{2}$. Then
\begin{align*}
   & c_{\frac{1}{2} 0}(G)=1,\qquad c_{\frac{1}{2} 1}(G)=-m,\qquad c_{\frac{1}{2} 2}(G)=\frac{1}{2}m^2-\frac{1}{4}m-\frac{1}{8}\sum_rd_r^2,\\
   & c_{\frac{1}{2} 3}(G)=-\frac{1}{3}\left(\frac{3}{4}t_G-\frac{3}{4}m^2+\frac{3}{8}\sum_rd_r^2 +\frac{1}{8}\left(4m^3-3m\sum_rd_r^2+\sum_rd_r^3\right)\right).
 \end{align*}
Note that $2A_{1/2}(G)=Q(G)$. By Lemma \ref{Mcoeff11}, we have
\begin{align*}
   q_{0}(G)&=c_{\frac{1}{2} 0}(G)=1,\qquad q_{1}(G)=2c_{\frac{1}{2} 1}(G)=-2m,\qquad q_{2}(G)=4c_{\frac{1}{2} 2}(G)=2m^2-m-\frac{1}{2}\sum_rd_r^2,\\
   q_{3}(G)&=8c_{\frac{1}{2} 3}(G)=-\frac{1}{3}\left(6t_G-6m^2+3\sum_rd_r^2 +4m^3-3m\sum_rd_r^2+\sum_rd_r^3\right)\\
   &=-\frac{1}{3}\left(6t_G-6m^2+4m^3+3(1-m)\sum_rd_r^2+\sum_rd_r^3\right).
 \end{align*}
This completes the proof.
\qed\end{proof}

In \cite{Nikiforov16}, Nikiforov proved the following results.

\begin{prop}\emph{(see \cite[Propositions 34 and 35]{Nikiforov16})}\label{prop:1}
Let $G$ be a graph with $n$ vertices and $m$ edges, and let
$\mathrm{deg}(G)=(d_1,d_2,\ldots,d_n)$ be the degree
sequence of $G$. Then
 \begin{align*}
 \sum_{i=1}^n\lambda_i &= \tr\left(A_{\alpha}(G)\right)=\alpha\sum_{i=1}^nd_i=2\alpha m,  \\
 \sum_{i=1}^n\lambda_i^2&=  \tr\left(A_{\alpha}^2(G)\right)= 2(1-\alpha)^2m+\alpha^2\sum_{i=1}^nd_i^2,
 \end{align*}
where $\lambda_i$'s are the eigenvalues of $A_{\alpha}(G)$ and $\tr(A_{\alpha}(G))$ means the trace of $A_{\alpha}(G)$.
\end{prop}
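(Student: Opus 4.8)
The plan is to reduce both identities to trace computations, using the fact that $A_{\alpha}(G)=\alpha D(G)+(1-\alpha)A(G)$ is a real symmetric matrix (being a real linear combination of the symmetric matrices $D(G)$ and $A(G)$). Consequently its eigenvalues $\lambda_1,\ldots,\lambda_n$ are real, and the two standard power-sum identities $\sum_{i=1}^n\lambda_i=\tr(A_{\alpha}(G))$ and $\sum_{i=1}^n\lambda_i^2=\tr(A_{\alpha}^2(G))$ hold. Thus it suffices to evaluate the trace of $A_{\alpha}(G)$ and the trace of its square.

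For the first formula I would use linearity of the trace directly on the definition, writing $\tr(A_{\alpha}(G))=\alpha\,\tr(D(G))+(1-\alpha)\,\tr(A(G))$. Here $\tr(A(G))=0$ because $G$ has no loops, so $A(G)$ has zero diagonal; and $\tr(D(G))=\sum_{i=1}^n d_i=2m$ by the handshaking lemma. This yields $\tr(A_{\alpha}(G))=\alpha\sum_{i=1}^n d_i=2\alpha m$, as required.

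For the second formula I would expand the square, writing $D=D(G)$ and $A=A(G)$ for brevity, as $A_{\alpha}^2(G)=\alpha^2 D^2+\alpha(1-\alpha)(DA+AD)+(1-\alpha)^2 A^2$, and then take the trace term by term. One computes $\tr(D^2)=\sum_{i=1}^n d_i^2$ immediately; $\tr(A^2)=\sum_{i,j}a_{ij}^2=\sum_{i=1}^n d_i=2m$ since the number of nonzero entries in row $i$ of $A$ equals $d_i$; and the cross term satisfies $\tr(DA+AD)=2\,\tr(DA)=0$ because the $i$-th diagonal entry of $DA$ is $d_i a_{ii}=0$. Combining these gives $\tr(A_{\alpha}^2(G))=\alpha^2\sum_{i=1}^n d_i^2+2(1-\alpha)^2 m$, which is exactly the claimed value.

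The argument is essentially routine arithmetic, so there is no serious obstacle; the one place where the graph structure (beyond mere linearity) genuinely enters is the vanishing of the cross term $\tr(DA+AD)$, which depends on $A(G)$ having a zero diagonal, i.e.\ on $G$ being loopless. I would flag this step explicitly, since it is the only point at which the absence of loops is invoked, and it is what prevents the degree and adjacency contributions from interacting in the second moment.
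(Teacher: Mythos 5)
Your proof is correct. Note that the paper itself gives no proof of Proposition \ref{prop:1} --- it is quoted directly from Nikiforov \cite{Nikiforov16} --- but your argument is exactly the method the paper does use for the analogous third-moment formula in Proposition \ref{prop:2}: expand the power of $A_{\alpha}(G)=\alpha D+(1-\alpha)A$, take traces term by term, and use the zero diagonal of $A$ (looplessness) to kill the mixed terms, together with $\tr(D^k)=\sum_i d_i^k$ and $\tr(A^2)=2m$. Nothing is missing; your explicit flagging of where the absence of loops enters is a point the paper's parallel computation leaves implicit.
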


Similarly, we obtain a formula for the sum of the cubes of the $A_{\alpha}$-eigenvalues.

\begin{prop}\label{prop:2}
Let $G$, $\lambda_i$ and $d_i$ be as in Proposition \ref{prop:1}. Then
$$
\sum_{i=1}^n\lambda_i^3=\tr\left(A_{\alpha}^3(G)\right) =\alpha^3\sum_{i=1}^nd_i^3+3\alpha(1-\alpha)^2\sum_{i=1}^nd_i^2+6(1-\alpha)^3t_G.
$$
\end{prop}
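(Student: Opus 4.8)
The plan is to compute the power sum $\sum_{i=1}^n\lambda_i^3$ directly as the trace $\tr(A_{\alpha}^3(G))$, which is legitimate because for any square matrix the sum of the cubes of the eigenvalues equals the trace of the cube. Writing $A_{\alpha}(G)=\alpha D+(1-\alpha)A$ with $D=D(G)$ and $A=A(G)$, I would expand the cube noncommutatively, keeping all six mixed monomials separate since $D$ and $A$ need not commute:
$$(\alpha D+(1-\alpha)A)^3=\alpha^3D^3+\alpha^2(1-\alpha)(D^2A+DAD+AD^2)+\alpha(1-\alpha)^2(DA^2+ADA+A^2D)+(1-\alpha)^3A^3.$$

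First I would cut down the number of distinct traces using the cyclic invariance of the trace: $\tr(D^2A)=\tr(DAD)=\tr(AD^2)$ and $\tr(DA^2)=\tr(ADA)=\tr(A^2D)$, so the two mixed groups contribute $3\tr(D^2A)$ and $3\tr(DA^2)$ respectively. Then I would evaluate the four surviving traces from the combinatorial meaning of the matrix entries: $\tr(D^3)=\sum_i d_i^3$ since $D$ is diagonal; $\tr(D^2A)=\sum_i d_i^2 a_{ii}=0$ because $A$ has zero diagonal, which annihilates the entire $\alpha^2(1-\alpha)$ group; $\tr(DA^2)=\sum_i d_i (A^2)_{ii}=\sum_i d_i^2$ using the standard identity $(A^2)_{ii}=d_i$; and $\tr(A^3)=6t_G$, since $(A^3)_{ii}$ counts closed walks of length $3$ based at $v_i$ and each triangle is traversed by exactly $6$ such walks over all choices of starting vertex and orientation. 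Substituting these back into the expansion gives precisely $\alpha^3\sum_i d_i^3+3\alpha(1-\alpha)^2\sum_i d_i^2+6(1-\alpha)^3t_G$.

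I expect no real obstacle here; the only care needed is the bookkeeping of the noncommutative expansion and the justification of the two combinatorial trace identities $(A^2)_{ii}=d_i$ and $\tr(A^3)=6t_G$. As an independent check I would also recover the formula from Newton's identity $p_3=e_1p_2-e_2p_1+3e_3$, where $p_1=\sum\lambda_i$ and $p_2=\sum\lambda_i^2$ are supplied by Proposition \ref{prop:1}, and the elementary symmetric functions $e_1=-c_{\alpha 1}$, $e_2=c_{\alpha 2}$, $e_3=-c_{\alpha 3}$ are read off from Theorem \ref{Aalphacoeff1}. One then verifies that the spurious cross-terms in $m^2$, $m^3$, and $m\sum_i d_i^2$ all cancel, leaving exactly the stated expression; this confirms the direct trace computation.
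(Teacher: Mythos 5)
Your proposal is correct and follows essentially the same route as the paper: expand $A_{\alpha}^3(G)=(\alpha D+(1-\alpha)A)^3$ noncommutatively, use cyclicity of the trace to collapse the mixed terms, kill the $\alpha^2(1-\alpha)$ group via the zero diagonal of $A$, and evaluate $\tr(D^3)=\sum_i d_i^3$, $\tr(DA^2)=\sum_i d_i^2$, and $\tr(A^3)=6t_G$ combinatorially. Your write-up is in fact slightly more explicit than the paper's (which silently drops the zero-diagonal terms), and the Newton's-identity cross-check is a nice bonus but not needed.
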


\begin{proof}
Let $A:= A(G)$ and $D:= D(G)$. Then
\begin{align*}
 A_{\alpha}^3(G) =& \alpha^3D^3+\alpha^2(1-\alpha)DAD+\alpha^2(1-\alpha)AD^2+\alpha(1-\alpha)^2A^2D\\
   &+\alpha^2(1-\alpha)D^2A+\alpha(1-\alpha)^2 DA^2+\alpha(1-\alpha)^2ADA+(1-\alpha)^3A^3.
 \end{align*}
Taking the
trace of $A_{\alpha}^3(G)$, we have
\begin{align*}
   \tr\left(A_{\alpha}^3(G)\right) & =\alpha^3\tr\left(D^3\right)+3\alpha(1-\alpha)^2\tr\left(DA^2\right)+(1-\alpha)^3\tr\left(A^3\right)\\
   &=\alpha^3\sum_{i=1}^nd_i^3+3\alpha(1-\alpha)^2\sum_{i=1}^nd_i^2+6(1-\alpha)^3t_G.
 \end{align*}
This completes the proof.
\qed\end{proof}

Plugging Propositions \ref{prop:1} and \ref{prop:2} into Theorem \ref{Aalphacoeff1}, we have the following result.

\begin{cor}
Let $G$ and $\phi(A_{\alpha}(G);x)$ be as in Theorem \ref{Aalphacoeff1}. Then
\begin{align*}
   & c_{\alpha 0}(G)=1,\qquad c_{\alpha 1}(G)=-\sum_{i=1}^n\lambda_i =-\tr\left(A_{\alpha}(G)\right),\\
   &c_{\alpha 2}(G)=2\alpha^2m^2-\frac{1}{2}\sum_{i=1}^n\lambda_i^2 =2\alpha^2m^2-\frac{1}{2}\tr\left(A_{\alpha}^2(G)\right),\\
   & c_{\alpha 3}(G)=-\frac{1}{3}\left(\sum_{i=1}^n\lambda_i^3-3\alpha m\sum_{i=1}^n\lambda_i^2+4\alpha^3m^3\right) =-\frac{1}{3}\Big(\tr\left(A_{\alpha}^3(G)\right)-3\alpha m\, \tr\left(A_{\alpha}^2(G)\right)+4\alpha^3m^3\Big).
 \end{align*}
\end{cor}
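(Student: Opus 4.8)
The plan is to obtain the corollary purely by substitution, eliminating the combinatorial quantities $\sum_r d_r^2$, $\sum_r d_r^3$ and $t_G$ appearing in Theorem~\ref{Aalphacoeff1} in favour of the power sums $\sum_i\lambda_i$, $\sum_i\lambda_i^2$ and $\sum_i\lambda_i^3$ supplied by Propositions~\ref{prop:1} and~\ref{prop:2}. First I would record the three trace identities to be used: $\tr(A_{\alpha}(G))=\sum_i\lambda_i=2\alpha m$, next $\tr(A_{\alpha}^2(G))=\sum_i\lambda_i^2=2(1-\alpha)^2m+\alpha^2\sum_r d_r^2$, and finally $\tr(A_{\alpha}^3(G))=\sum_i\lambda_i^3=\alpha^3\sum_r d_r^3+3\alpha(1-\alpha)^2\sum_r d_r^2+6(1-\alpha)^3 t_G$. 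Each of the four coefficient formulas of Theorem~\ref{Aalphacoeff1} will then be matched against a linear combination of these power sums.

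The cases of $c_{\alpha 0}$ and $c_{\alpha 1}$ are immediate: $c_{\alpha 0}=1$ is unchanged, and $c_{\alpha 1}=-2\alpha m=-\sum_i\lambda_i=-\tr(A_{\alpha}(G))$ is exactly the first identity. For $c_{\alpha 2}$, I would rewrite the second identity as $\frac{1}{2}\sum_i\lambda_i^2=(1-\alpha)^2m+\frac{1}{2}\alpha^2\sum_r d_r^2$; this is precisely the quantity subtracted from $2\alpha^2m^2$ in Theorem~\ref{Aalphacoeff1}, so $c_{\alpha 2}=2\alpha^2m^2-\frac{1}{2}\sum_i\lambda_i^2=2\alpha^2m^2-\frac{1}{2}\tr(A_{\alpha}^2(G))$.

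The only step requiring any bookkeeping is $c_{\alpha 3}$, and this is where essentially all of the (modest) effort lies. Here I would form the candidate combination $\sum_i\lambda_i^3-3\alpha m\sum_i\lambda_i^2+4\alpha^3m^3$ and substitute the trace identities term by term. The $\sum_i\lambda_i^3$ summand contributes $6(1-\alpha)^3t_G$, $3\alpha(1-\alpha)^2\sum_r d_r^2$ and $\alpha^3\sum_r d_r^3$; the summand $-3\alpha m\sum_i\lambda_i^2$ contributes $-6\alpha(1-\alpha)^2m^2$ and $-3\alpha^3 m\sum_r d_r^2$; adjoining the remaining $4\alpha^3m^3$ then reassembles the last bracket of $c_{\alpha 3}$ into $\alpha^3\bigl(\sum_r d_r^3-3m\sum_r d_r^2+4m^3\bigr)$. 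Multiplying the whole by $-\frac{1}{3}$ reproduces the expression of Theorem~\ref{Aalphacoeff1} verbatim, completing the proof. The one point to watch is that the cubic-degree and quadratic-degree contributions are split across two different power sums, so the coefficients of the $\alpha^3 m\sum_r d_r^2$ and $\alpha(1-\alpha)^2 m^2$ terms must be tracked carefully to confirm that they combine into the stated form; beyond this routine matching there is no genuine obstacle.
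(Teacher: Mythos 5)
Your proposal is correct and matches the paper's approach exactly: the paper derives this corollary precisely by ``plugging Propositions \ref{prop:1} and \ref{prop:2} into Theorem \ref{Aalphacoeff1},'' which is the substitution you carry out, and your bookkeeping for the $c_{\alpha 3}$ case (splitting $\sum_i\lambda_i^3 - 3\alpha m\sum_i\lambda_i^2 + 4\alpha^3 m^3$ across the two trace identities) checks out term by term.
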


\subsection{Examples}
In this subsection, we give the first four coefficients of $A_{\alpha}$-characteristic polynomials of some specific graphs.

\begin{exam}\label{Exam1}
{\em Let $P_n$ be the path with $n$ vertices. Clearly, it has $n-1$ edges, and the number of triangles in $P_n$ is $t_{P_n}=0$. By Theorem \ref{Aalphacoeff1}, we have
 \begin{align*}
& c_{\alpha 0}(P_n)=1,\quad c_{\alpha 1}(P_n)=-2(n-1)\alpha,\\
&c_{\alpha 2}(P_n)=(2n-3)(n-2) {\alpha}^{2}+ 2(n-1)\alpha-(n-1),\\
& c_{\alpha 3}(P_n)=  -\frac{2}{3}( 2n-5)(n-2)(n-3){\alpha}^{3} -4({n}-2)^2{\alpha}^{2}+2({n}-2)^2\alpha.
 \end{align*}
By Corollary \ref{Qcoeff1}, we have
\begin{align*}
 &q_{0}(P_n)=1,\quad q_{1}(P_n)=-2(n-1),\quad q_{2}(P_n)=(2n-3)(n-2), \\
 &q_{3}(P_n)= -\frac{2}{3}(2n-5)(n-2)(n-3).
 \end{align*}
}
\end{exam}

\begin{exam}\label{Exam2}
{\em Let $K_n$ be the complete graph with $n$ vertices. Clearly, it has $n(n-1)/2$ edges, and the number of triangles in $K_n$ is $t_{K_n}=n(n-1)(n-2)/6$. By Theorem \ref{Aalphacoeff1}, we have
 \begin{align*}
c_{\alpha 0}(K_n)=&\, 1,\quad c_{\alpha 1}(K_n)=-n(n-1)\alpha,\\
c_{\alpha 2}(K_n)=&\, \frac{1}{2}{n}^{2}(n-1)(n-2)\alpha^2+n(n-1)\alpha-\frac{1}{2}n(n-1),\\
c_{\alpha 3}(K_n)=& -\frac{1}{6}n^3(n-1)(n-2)(n-3)\alpha^3-n^2(n-1)(n-2)\alpha^2 \\
&+\frac{1}{2}n(n-1)(n-2)(n+1)\alpha-\frac{1}{3}n(n-1)(n-2).
 \end{align*}
By Corollary \ref{Qcoeff1}, we have
\begin{align*}
 &q_{0}(K_n)=1,\quad q_{1}(K_n)=-n(n-1),\quad q_{2}(K_n)=\frac{1}{2}n^2(n-1)(n-2), \\
 &q_{3}(K_n)=-\frac{1}{6}n(n+1)(n-1)(n-2)^3.
 \end{align*}
}
\end{exam}

\begin{exam}\label{Exam3}
{\em Let $C_n$ be the cycle with $n\ge4$ vertices. Clearly, it has $n$ edges, and the number of triangles in $C_n$ is $t_{C_n}=0$. By Theorem \ref{Aalphacoeff1}, we have
 \begin{align*}
&c_{\alpha 0}(C_n)=1,\quad c_{\alpha 1}(C_n)=-2n\alpha,\quad c_{\alpha 2}(G)=n(2n-3)\alpha^{2}+2n\alpha-n,\\
&c_{\alpha 3}(C_n)=-\frac{2}{3}n(n-2)(2n-5)\alpha^{3}-4n(n-2)\alpha^{2}+2n(n-2)\alpha.
 \end{align*}
By Corollary \ref{Qcoeff1}, we have
\begin{align*}
 q_{0}(C_n)=1,\quad q_{1}(C_n)=-2n,\quad q_{2}(C_n)=n(2n-3), \quad q_{3}(C_n)=-\frac{2}{3}n(n-2)(2n-5).
 \end{align*}
}
\end{exam}

\begin{exam}\label{Exam4}
{\em Let $F_n$ be the friendship graph with $2n+1$ vertices and $3n$ edges. The number of triangles in $F_n$ is $t_{F_n}=n$. By Theorem \ref{Aalphacoeff1}, we have
 \begin{align*}
&c_{\alpha 0}(F_n)=1,\quad c_{\alpha 1}(F_n)=-6n\alpha, \quad c_{\alpha 2}(F_n)=n(16n-7)\alpha^{2}+6n\alpha-3n,\\
&c_{\alpha 3}(F_n)=-\frac{2}{3}n(40n-17)(n-1)\alpha^{3}-2n(14n-5)\alpha^{2}+2n(7n-1)\alpha-2n.
 \end{align*}
By Corollary \ref{Qcoeff1}, we have
\begin{align*}
 q_{0}(F_n)=1,\quad q_{1}(F_n)=-6n,\quad q_{2}(F_n)=n(16n-7), \quad q_{3}(F_n)=-\frac{2}{3}n(40n^2-57n+23).
 \end{align*}
}
\end{exam}

\begin{exam}\label{Exam5}
{\em Let $W_n$ be the wheel graph with $n+1$ vertices and $2n$ edges. The number of triangles in $W_n$ is $t_{W_n}=n$. By Theorem \ref{Aalphacoeff1}, we have
 \begin{align*}
&c_{\alpha 0}(W_n)=1,\quad c_{\alpha 1}(W_n)=-4n\alpha,\quad c_{\alpha 2}(W_n)=\frac{1}{2}n(15n-13)\alpha^{2}+4n\alpha-2n,\\
&c_{\alpha 3}(W_n)=-n(n-1)(9n-16)\alpha^{3}-2n(7n-6)\alpha^{2}+n(7n-3)\alpha-2n.
 \end{align*}
By Corollary \ref{Qcoeff1}, we have
\begin{align*}
 q_{0}(W_n)=1,\quad q_{1}(W_n)=-4n,\quad q_{2}(W_n)=\frac{1}{2}n(15n-13), \quad q_{3}(W_n)=-n(9n^2-25n+20).
 \end{align*}
}
\end{exam}

\begin{exam}\label{Exam6}
{\em Let $K_{a,b}$ be the complete bipartite graph with partition sets of sizes $a$ and $b$. The number of triangles in $K_{a,b}$ is $t_{K_{a,b}}=0$. By Theorem \ref{Aalphacoeff1}, we have
 \begin{align*}
c_{\alpha 0}(K_{a,b})=&1,\quad c_{\alpha 1}(K_{a,b})=-2ab\alpha,\quad c_{\alpha 2}(K_{a,b})=\frac{1}{2}ab(4ab-a-b-2)\alpha^{2}+2ab\alpha-ab,\\
c_{\alpha 3}(K_{a,b}) =&-\frac{1}{3}ab(4{a}^{2}{b}^{2}-3{a}^{2}b-3a{b}^{2}+{a}^{2}-6ab+{b}^{2}+3a+3b)\alpha^{3}\\ &-2ab(2ab-a-b)\alpha^{2}+ab(2ab-a-b)\alpha.
 \end{align*}
By Corollary \ref{Qcoeff1}, we have
\begin{align*}
& q_{0}(K_{a,b})=1,\quad q_{1}(K_{a,b})=-2ab,\quad q_{2}(K_{a,b})=\frac{1}{2}ab(4ab-a-b-2), \\
& q_{3}(K_{a,b})=-\frac{1}{3}ab(4{a}^{2}{b}^{2}-3{a}^{2}b-3a{b}^{2}+{a}^{2}-6ab+{b}^{2}+3a+3b).
 \end{align*}
}
\end{exam}

\section{Graphs determined by the $A_{\alpha}$-spectra}\label{Sec:DS}

Recall that two graphs are said to be \emph{$A_{\alpha}$-cospectral} if they have the same $A_{\alpha}$-spectrum (i.e., the same $A_{\alpha}$-characteristic polynomial). A graph $H$ is called an \emph{$A_{\alpha}$-cospectral mate} of a graph $G$ if $H$ and $G$ are $A_{\alpha}$-cospectral but $H$ is not isomorphic to $G$. Recall also that a graph $G$ is called an \emph{$A_{\alpha}$-DS graph} if it is \emph{determined by its $A_{\alpha}$-spectrum}, meaning that $G$ has no $A_{\alpha}$-cospectral mate.

\subsection{Enumeration}\label{Sec:Enum}
In this subsection, we have enumerated the $A_{\alpha}$-characteristic polynomials for all graphs on at most 10 vertices, and count the number of graphs for which there exists at least one $A_{\alpha}$-cospectral mate.

To determine the $A_{\alpha}$-characteristic polynomials of graphs we first of all have to generate the graphs by computer. All graphs on at most 10 vertices are generated by nauty and Traces~\cite{McPi}. Then the $A_{\alpha}$-characteristic polynomials of these graphs are computed by a Maple procedure. Finally we count the number of $A_{\alpha}$-cospectral graphs.

The results are in Table~\ref{tab:Table 1}. This table lists for $n\le 10$ the total number of graphs on $n$ vertices, the total number of distinct $A_{\alpha}$-characteristic polynomials of such graphs, the number of such graphs with an $A_{\alpha}$-cospectral mate, the fraction of such graphs with an $A_{\alpha}$-cospectral mate, and the size of the largest family of $A_{\alpha}$-cospectral graphs.

\begin{table}[htb]
\centering
\captionsetup{singlelinecheck=off,skip=0pt}
\caption{Computational data on $n\le 10$ vertices}\vspace{0.2cm}
\label{tab:Table 1}
    \begin{tabular}{rrrrrr}\hline
    $n$ & $\#$graphs & $\#$ $A_{\alpha}$-char. pols & $\#$ with mate & frac. with mate & max. family\\ \hline
     1  & 1          & 1              & 0                      &0                & 1 \\
     2  & 2          & 2              & 0                      &0                & 1 \\
     3  & 4          & 4              & 0                      &0                & 1 \\
     4  & 11         & 11             & 0                      &0                & 1 \\
     5  & 34         & 34             & 0                      &0                & 1 \\
     6  & 156        & 156            & 0                      &0                & 1 \\
     7  & 1044       & 1044           & 0                      &0                & 1 \\
     8  & 12346      & 12346          & 0                      &0                & 1 \\
     9  & 274668     & 274667         & 2                      & 0.000007281     & 2 \\
     10 & 12005168   & 12000093       & 10146                  & 0.000845136     & 3 \\ \hline
    \end{tabular}
\end{table}

In Table~\ref{tab:Table 1}, we see that the smallest $A_{\alpha}$-cospectral graphs, with respect to the order, contain 9 vertices. There is exactly one pair of $A_{\alpha}$-cospectral graphs on 9 vertices (see Fig. \ref{fig:Figure_1}). The common $A_{\alpha}$-characteristic polynomials of $G$ and $H$ in Fig. \ref{fig:Figure_1} are
\begin{align*}
&\quad \phi(A_{\alpha}(G);x)=\phi(A_{\alpha}(H);x)\\
&=x^9-36\alpha x^8+(556\alpha^2+36\alpha-18)x^7-(4806\alpha^3+1042\alpha^2-542\alpha+14)x^6\\
&\quad+(25393\alpha^4+12578\alpha^3-6513\alpha^2+60\alpha+67)x^5\\
&\quad-(83826\alpha^5+81818\alpha^4-39908\alpha^3-2746\alpha^2+1634\alpha-60)x^4\\ &\quad+(168450\alpha^6+308434\alpha^5-130253\alpha^4-34080\alpha^3+14391\alpha^2-576\alpha-62)x^3\\ &\quad-(187812\alpha^7+669816\alpha^6-207004\alpha^5-159042\alpha^4+57080\alpha^3-188\alpha^2-1064\alpha+46)x^2\\ &\quad+(88560\alpha^8+768822\alpha^7-96837\alpha^6-331528\alpha^5+99686\alpha^4+9946\alpha^3-5173\alpha^2+272\alpha+12)x\\ &\quad-(354240\alpha^8+60948\alpha^7-256002\alpha^6+57690\alpha^5+20308\alpha^4-6952\alpha^3+54\alpha^2+122\alpha-8).
\end{align*}

\begin{figure}
\centering
\vspace{-7.5cm}
\includegraphics[scale=0.75]{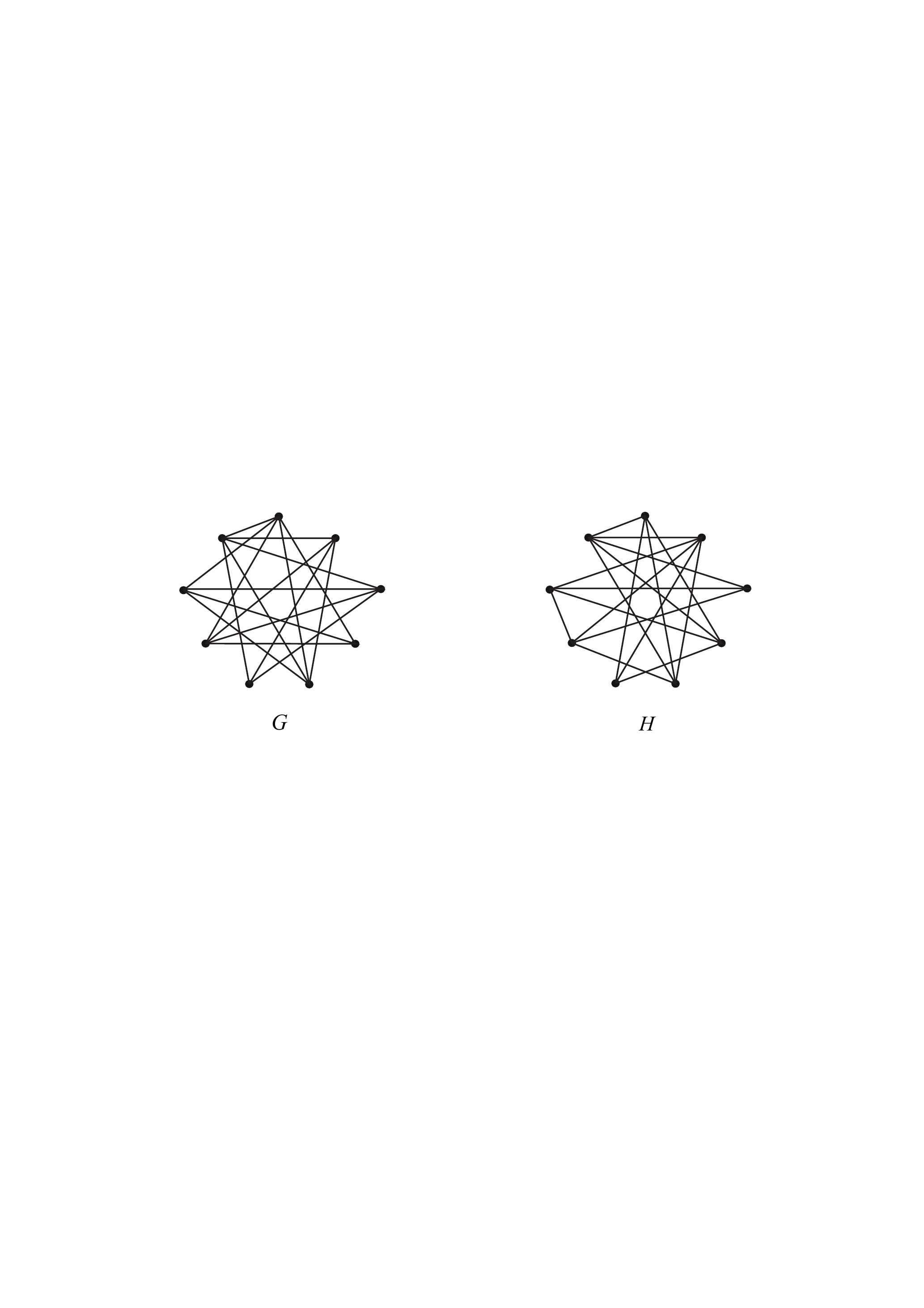}
\vspace{-9.5cm}
   \caption{The smallest pair of $A_{\alpha}$-cospectral graphs.}
   \protect\label{fig:Figure_1}
\end{figure}

By comparing the computational results of Table \ref{tab:Table 1} with that of \cite[Tables 1 and 2]{HaemersS04}, we see that $A_{\alpha}$-spectra are much more efficient than $A$-spectra and $Q$-spectra when we use them to distinguish graphs.

\subsection{$A_{\alpha}$-DS graphs}\label{Sec:Appl}

In this subsection, we give some graphs determined by the $A_{\alpha}$-spectra. Recall that $A_0(G)=A(G)$ and $2A_{1/2}(G)=Q(G)$. Thus, properties determined by the $A$-spectrum (respectively, $Q$-spectrum) can also be determined by the $A_{\alpha}$-spectrum.

\begin{thm}\label{ThDS:1}
Let $G$ be a graph. The following can be determined by its $A_{\alpha}$-spectrum:
\begin{itemize}
\item[\rm (a)] The number of vertices of $G$;
\item[\rm (b)] The number of edges of $G$;
\item[\rm (c)]  Whether $G$ is regular;
\item[\rm (d)]  Whether $G$ is regular with any fixed girth;
\item[\rm (e)] Whether $G$ is bipartite;
\item[\rm (f)] The number of closed walks of any fixed length. In particular, the number of triangles;
\item[\rm (g)] The sum of the square of the degrees of all vertices of $G$;
\item[\rm (h)] The sum of the cube of the degrees of all vertices of $G$.
\end{itemize}
In particular, if $G$ is bipartite, then the following can also be determined by its $A_{\alpha}$-spectrum:
\begin{itemize}
\item[\rm (i)] The number of components of $G$;
\item[\rm (j)] The number of spanning trees of $G$.
\end{itemize}
\end{thm}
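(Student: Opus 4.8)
The plan is to fix $\alpha$ once and for all and to read everything off the trace identities forced by cospectrality. If $G$ and $H$ are $A_\alpha$-cospectral then $\tr(A_\alpha(G)^k)=\tr(A_\alpha(H)^k)$ for all $k$. Expanding $A_\alpha=\alpha D+(1-\alpha)A$, I would write
\[
\tr(A_\alpha^k)=\sum_{j=0}^{k}\alpha^{j}(1-\alpha)^{k-j}\,T_{k,j},\qquad T_{k,j}:=\sum_{w}\tr(w),
\]
the inner sum running over the length-$k$ words $w$ in $D$ and $A$ with exactly $j$ factors $D$. Each $T_{k,j}$ is an integer graph invariant; in particular $T_{k,0}=\tr(A^k)$ counts closed $k$-walks, $T_{k,k}=\sum_r d_r^k$, and $T_{3,0}=6t_G$, $T_{3,1}=3\sum_r d_r^2$, $T_{3,2}=0$, $T_{3,3}=\sum_r d_r^3$. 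Writing $\Delta T_{k,j}$ for the difference of these invariants of $G$ and $H$, cospectrality becomes the system of integer relations $\sum_{j}\alpha^{j}(1-\alpha)^{k-j}\Delta T_{k,j}=0$ for $k=1,2,\dots$.

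The low-order items need only $\alpha\neq 0$. Item (a) is immediate. The $k=1$ relation is $\alpha\,\Delta(\sum_r d_r)=0$, giving (b); feeding $\Delta(\sum_r d_r)=0$ into the $k=2$ relation forces $\Delta(\sum_r d_r^2)=0$, giving (g); and (c) follows because $G$ is regular iff $\sum_r d_r^2=4m^2/n$, a relation among the quantities already recovered. For (d), once $G$ is regular of degree $r=2m/n$ we have $A_\alpha=\alpha rI+(1-\alpha)A$, so the $A$-spectrum is obtained from the $A_\alpha$-spectrum by an affine change of variable, and the girth of a regular graph is determined by its $A$-spectrum.

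The crux is the separation of the entangled invariants, and this is exactly where a single fixed $\alpha$ does the work. Set $\beta=\alpha/(1-\alpha)$, so the $k$-th relation reads $\sum_j\beta^{j}\Delta T_{k,j}=0$. After substituting $\Delta(\sum_r d_r)=\Delta(\sum_r d_r^2)=0$, the $k=3$ relation collapses to $6\,\Delta t_G+\beta^{3}\Delta(\sum_r d_r^3)=0$; as soon as $\beta^{3}\notin\mathbb{Q}$ the two integers are forced to vanish, which yields (f) for triangles and (h) at one stroke. I would then take $\alpha$ with $\beta$ transcendental (any transcendental $\alpha\in(0,1)$ will do): then $1,\beta,\dots,\beta^{k}$ are $\mathbb{Q}$-linearly independent for every $k$, so every relation forces $\Delta T_{k,j}=0$ for all $j,k$. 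From this single conclusion the remaining items fall out, because $\Delta T_{k,0}=0$ gives all closed-walk counts (f), $\Delta T_{k,k}=0$ gives the entire degree sequence, and the identities $\tr(L^k)=\sum_j(-1)^{k-j}T_{k,j}$ and $\tr(Q^k)=\sum_j T_{k,j}$ then recover the $A$-, $L$- and $Q$-spectra outright: (e) follows from symmetry of the $A$-spectrum, (i) from the multiplicity of $0$ in the $L$-spectrum, and (j) from the Matrix--Tree theorem applied to the $L$-spectrum (in fact (i) and (j) come out for every $G$ under this argument, the bipartite hypothesis being unnecessary here).

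The hard part is uniformity in $\alpha$. The whole separation engine rests on $\mathbb{Q}$-linear independence of the powers of $\beta=\alpha/(1-\alpha)$, and this degenerates at special values: whenever $\alpha$ is rational (so $\beta\in\mathbb{Q}$) the relations can no longer isolate $t_G$ from $\sum_r d_r^3$, the extreme case being $\alpha=\tfrac12$ (then $\beta=1$ and the three cubic contributions merge), while $\alpha=0$ only ever returns the closed-walk numbers and says nothing about degrees. Thus the argument above proves the theorem cleanly for a fixed generic $\alpha$ but not obviously for every $\alpha\in[0,1]$. To close that gap I would either restrict to such $\alpha$, or at the exceptional values fall back on the classical characterizations at $A_0=A$ and $2A_{1/2}=Q$ together with the coincidence of the $L$- and $Q$-spectra on bipartite graphs (which is the true source of the bipartite hypothesis in (i) and (j)), supplemented by Nikiforov's description of the least eigenvalue of $A_\alpha$ to detect bipartiteness directly.
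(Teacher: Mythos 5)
Your argument is sound, but it takes a genuinely different route from the paper's, and the comparison is instructive. The paper's proof is a two-line reduction: it treats the $A_{\alpha}$-characteristic polynomial as a polynomial in $\alpha$ as well as in $x$ (this is how the enumeration in Section \ref{Sec:Enum} computes it, and the proof explicitly ``sets $\alpha=1/2$''), so the specializations $\alpha=0$ and $\alpha=1/2$ hand over the $A$- and $Q$-spectra outright; items (a)--(f) are then quoted from \cite[Lemma 4]{DamH03}, items (g)--(h) are read off from Theorem \ref{Aalphacoeff1} at $\alpha=1/2$ once $m$ and $t_G$ are known, and items (i)--(j) follow from \cite[Lemma 4]{DamH03} together with the coincidence of the $L$- and $Q$-spectra of bipartite graphs \cite{Cvetkovic07}. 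Your trace-word expansion $\tr(A_{\alpha}^k)=\sum_j\alpha^j(1-\alpha)^{k-j}T_{k,j}$ is a more self-contained engine and, where it applies, proves more: once every $\Delta T_{k,j}$ vanishes you recover the $A$-, $L$- and $Q$-power sums simultaneously, so (i) and (j) come out for \emph{every} graph, bipartite or not --- a genuine strengthening that the paper's route through $L=Q$ on bipartite graphs cannot see. Moreover, the ``hard part'' you identify, uniformity in $\alpha$, dissolves under the paper's actual convention: there, cospectrality is an identity in $\alpha$, each trace relation $\sum_j\alpha^j(1-\alpha)^{k-j}\Delta T_{k,j}=0$ holds identically, and since the Bernstein-type family $\{\alpha^j(1-\alpha)^{k-j}\}_{j=0}^{k}$ is linearly independent, all $\Delta T_{k,j}=0$ follows with no transcendence hypothesis on $\beta$ at all. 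That this identity-in-$\alpha$ reading is the intended one is confirmed by the fact that the pointwise statement you set out to prove is genuinely false at special values: $K_{1,4}$ and $C_4\cup K_1$ are $A$-cospectral with $\sum_r d_r^2$ equal to $20$ and $16$ respectively, so (g) and (h) fail at $\alpha=0$, and your proposed fallback to the classical $A$-spectrum results cannot repair them there. Two minor points on your text: item (d) needs $\alpha\neq 1$ as well as $\alpha\neq 0$ for the affine change of variable to return the $A$-spectrum; and your patch at exceptional values covers only $\alpha\in\{0,\tfrac12\}$, leaving, e.g., $\alpha=\tfrac13$ unresolved under the pointwise reading --- but since the paper never asserts that stronger pointwise claim, this is a caveat on a theorem you strengthened, not a gap in the statement as the paper means it.
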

\begin{proof}
Items (a)-(f) come from \cite[Lemma 4 (i)-(vi)]{DamH03}. Items (g) and (h) are obtained from (f) and Theorem \ref{Aalphacoeff1} by setting $\alpha=1/2$. Items (i) and (j) come from \cite[Lemma 4 (vii)-(viii)]{DamH03} and the fact \cite[Proposition 2.3]{Cvetkovic07} that a bipartite graph has the same $L$-spectrum and $Q$-spectrum.
\qed\end{proof}

\begin{rem}
{\em Items (a), (b) and (g) of Theorem \ref{ThDS:1} can also be deduced from the $Q$-characteristic polynomial of a graph $G$ (see \cite{Cvetkovic07}, or Corollary \ref{Qcoeff1}).}
\end{rem}

Next, we give some $A_{\alpha}$-DS graphs. The following result can be obtained immediately from the relations between $A(G)$, $Q(G)$ and $A_{\alpha}(G)$.

\begin{prop}
If a graph $G$ is determined by its $A$-spectrum or $Q$-spectrum, then $G$ is determined by its $A_{\alpha}$-spectrum.
\end{prop}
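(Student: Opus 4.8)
The plan is to prove the statement directly by \emph{specialization} of the parameter $\alpha$. Recall that ``$A_{\alpha}$-cospectral'' means having the same $A_{\alpha}$-characteristic polynomial $\phi(A_{\alpha}(G);x)=\sum_j c_{\alpha j}(G)x^{n-j}$, whose coefficients $c_{\alpha j}(G)$ are themselves polynomials in $\alpha$ (as computed in Theorem \ref{Aalphacoeff1} and displayed in Section \ref{Sec:Enum}); thus two graphs are $A_{\alpha}$-cospectral precisely when their $A_{\alpha}$-characteristic polynomials agree as polynomials in both $x$ and $\alpha$. The key observation is that such an identity can be evaluated at any particular value of $\alpha$, and at the two distinguished values $\alpha=0$ and $\alpha=1/2$ it recovers $A$-cospectrality and $Q$-cospectrality, respectively.

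Concretely, suppose $H$ is an $A_{\alpha}$-cospectral mate of $G$, so that $\phi(A_{\alpha}(H);x)=\phi(A_{\alpha}(G);x)$. If $G$ is $A$-DS, I would set $\alpha=0$: since $A_{0}(G)=A(G)$, the identity collapses to $\phi(A(H);x)=\phi(A(G);x)$, so $H$ is $A$-cospectral with $G$. As $G$ is $A$-DS, this forces $H\cong G$, and hence $G$ has no $A_{\alpha}$-cospectral mate, i.e., $G$ is $A_{\alpha}$-DS.

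If instead $G$ is $Q$-DS, I would set $\alpha=1/2$, giving $\phi(A_{1/2}(H);x)=\phi(A_{1/2}(G);x)$. Because $Q(G)=2A_{1/2}(G)$, Lemma \ref{Mcoeff11} (with $k=2$) multiplies the $j$-th coefficient of each side by $2^{j}$, so the identity upgrades to $\phi(Q(H);x)=\phi(Q(G);x)$; equivalently, scaling a matrix by $2$ merely scales its spectrum by $2$, so $A_{1/2}$-cospectrality and $Q$-cospectrality coincide. Since $G$ is $Q$-DS, again $H\cong G$. (The number of vertices automatically matches, as equal characteristic polynomials have equal degree.) The only genuine subtlety---hardly an obstacle once made explicit---is fixing the meaning of ``$A_{\alpha}$-cospectral'' as equality of the full parametrized polynomial, which is exactly what licenses the specializations at $\alpha=0$ and $\alpha=1/2$; the remainder of the argument is then immediate from the relations $A_{0}(G)=A(G)$ and $2A_{1/2}(G)=Q(G)$.
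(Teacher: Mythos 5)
Your proof is correct and takes essentially the same approach as the paper: the paper gives no explicit proof, stating only that the result ``can be obtained immediately from the relations between $A(G)$, $Q(G)$ and $A_{\alpha}(G)$,'' which is exactly the specialization argument you spell out via $A_{0}(G)=A(G)$ and $2A_{1/2}(G)=Q(G)$. Your write-up merely makes explicit the details the paper leaves implicit --- evaluating the parametrized characteristic polynomial at $\alpha=0$ and $\alpha=1/2$, and absorbing the harmless factor of $2$ relating $A_{1/2}$-cospectrality to $Q$-cospectrality.
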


Up until now, many graphs have been proved to be determined by its $A$-spectrum or $Q$-spectrum, for examples, the path \cite[Proposition 1]{DamH03}, the cycle \cite[Proposition 5]{DamH03}, the complete graphs \cite[Proposition 5]{DamH03}, the lollipop graph \cite{BouletJ08, HaemersLZ08, ZhangLZY09}, ect. So, all such graphs are $A_{\alpha}$-DS graphs.

\begin{prop}\label{BproL1}
If $G$ is bipartite and determined by its $L$-spectrum, then $G$ is determined by its $A_{\alpha}$-spectrum.
\end{prop}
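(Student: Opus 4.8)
The plan is to show that any graph $A_{\alpha}$-cospectral with $G$ must in fact share the $L$-spectrum of $G$, after which the hypothesis that $G$ is determined by its $L$-spectrum closes the argument. So I would start by taking an arbitrary graph $H$ with the same $A_{\alpha}$-characteristic polynomial as $G$, and aim to prove $H\cong G$. Throughout I read $A_{\alpha}$-cospectrality as equality of the $A_{\alpha}$-characteristic polynomials (viewed as in Section \ref{Sec:Enum}, so that the value $\alpha=\tfrac12$ may be used); for a single fixed $\alpha$ the bridge below is available exactly when $\alpha=\tfrac12$, via $2A_{1/2}(G)=Q(G)$.

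The first step is to record that $H$ is forced to be bipartite. This is immediate from Theorem \ref{ThDS:1}(e): bipartiteness is determined by the $A_{\alpha}$-spectrum, and $G$ is bipartite by hypothesis. Hence both $G$ and $H$ are bipartite graphs with identical $A_{\alpha}$-characteristic polynomials.

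Next I would pass from the $A_{\alpha}$-spectrum to the $L$-spectrum through $\alpha=\tfrac12$. For a bipartite graph $K$ with bipartition $(V_1,V_2)$, let $S=\diag(s_1,\dots,s_n)$ with $s_i=+1$ for $v_i\in V_1$ and $s_i=-1$ for $v_i\in V_2$. Since every edge joins the two parts, $SA(K)S=-A(K)$ while $SD(K)S=D(K)$, so $SL(K)S=D(K)+A(K)=Q(K)$; thus $L(K)$ and $Q(K)$ are similar and share a spectrum. Combined with $2A_{1/2}(K)=Q(K)$ and Lemma \ref{Mcoeff11}, the $A_{1/2}$-spectrum of a bipartite graph determines its $L$-spectrum. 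Applying this to both $G$ and $H$, and using that they are $A_{1/2}$-cospectral, I obtain that $G$ and $H$ have the same $L$-spectrum. Since $G$ is determined by its $L$-spectrum, $H\cong G$, as required.

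I expect the main obstacle to be precisely the step guaranteeing that $H$ is bipartite. Without it the argument collapses: from $A_{1/2}$-cospectrality alone one learns only that $H$ is $Q$-cospectral with $G$, and a $Q$-cospectral mate of a bipartite graph need not itself be bipartite, so the identity ``$L$-spectrum $=$ $Q$-spectrum'' cannot be transferred to $H$. This is exactly the classical reason why a bipartite $L$-DS graph is not automatically $Q$-DS. What makes the present proposition go through is that in the $A_{\alpha}$ setting bipartiteness is a spectral invariant (Theorem \ref{ThDS:1}(e)): the mate $H$ is bipartite for free, and the chain $A_{1/2}\rightarrow Q\rightarrow L$ then becomes available. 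The only remaining care is interpretational, namely that specializing the common $A_{\alpha}$-characteristic polynomial to $\alpha=\tfrac12$ is legitimate, which holds under the convention above.
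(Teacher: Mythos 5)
Your proof is correct and follows essentially the same route as the paper's: take an $A_{\alpha}$-cospectral mate, use Theorem \ref{ThDS:1}(e) to force it to be bipartite, pass through $2A_{1/2}=Q$ to $Q$-cospectrality, invoke the equality of $L$- and $Q$-spectra for bipartite graphs, and finish with the $L$-DS hypothesis. The only differences are cosmetic: you supply the similarity argument $SL(K)S=Q(K)$ that the paper merely recalls, and you make explicit the interpretational point (specializing the $A_{\alpha}$-characteristic polynomial at $\alpha=\tfrac12$) that the paper leaves implicit.
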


\begin{proof}
Let $G'$ be $A_{\alpha}$-cospectral with $G$. Then $G'$ and $G$ are $Q$-cospectral. By (e) of Theorem \ref{ThDS:1}, we obtain that $G'$ is bipartite. Recall that $L$-spectrum and $Q$-spectrum of a bipartite graph are equal. Then $G'$ and $G$ are $L$-cospectral. Since $G$ is determined by its $L$-spectrum, $G'$ is isomorphic to $G$. Thus, $G$ is determined by its $A_{\alpha}$-spectrum. \qed
\end{proof}

A tree is called \emph{starlike} if it has exactly one vertex of degree greater than two.  In \cite{OmidiT07}, all starlike trees were proved to be determined by their $L$-spectra. Thus, Proposition \ref{BproL1} implies the following result immediately.

\begin{cor}
All starlike trees are determined by their $A_{\alpha}$-spectra.
\end{cor}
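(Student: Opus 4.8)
The plan is to deduce the corollary directly from Proposition~\ref{BproL1} together with the cited result on Laplacian spectra, so that the whole argument reduces to verifying the two hypotheses of that proposition for a starlike tree. The two ingredients I would assemble are: (i) every starlike tree is bipartite, and (ii) every starlike tree is determined by its $L$-spectrum.

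For (i) I would simply observe that a starlike tree is, by definition, a tree, and every tree is bipartite because it is acyclic and hence contains no odd cycle. For (ii) I would invoke the theorem of Omidi and Tayfeh-Rezaie \cite{OmidiT07}, recalled immediately above the statement, which asserts precisely that all starlike trees are $L$-DS.

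With both hypotheses in hand, I would apply Proposition~\ref{BproL1} to a starlike tree $T$: since $T$ is bipartite and determined by its $L$-spectrum, the proposition yields that $T$ is determined by its $A_{\alpha}$-spectrum, which is exactly the assertion. Thus the corollary is a one-step consequence, and no computation with the coefficients from Theorem~\ref{Aalphacoeff1} is needed.

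The main obstacle is essentially nonexistent at this level of generality, precisely because the substantive work has already been absorbed into Proposition~\ref{BproL1} (whose proof relies on the bipartiteness test of Theorem~\ref{ThDS:1}(e) and on the equality of the $L$- and $Q$-spectra for bipartite graphs) and into the external classification of $L$-DS starlike trees. If one insisted on a self-contained argument, the genuinely hard part would be reproving (ii), namely that starlike trees are $L$-DS, which demands the detailed combinatorial analysis of \cite{OmidiT07}; but since that result may be quoted, the corollary follows at once.
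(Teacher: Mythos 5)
Your proposal is correct and follows exactly the paper's own argument: starlike trees are bipartite (being trees) and are $L$-DS by \cite{OmidiT07}, so Proposition~\ref{BproL1} applies immediately. The only slip is cosmetic --- the cited authors are Omidi and Tajbakhsh, not Omidi and Tayfeh-Rezaie --- but the reference itself is the right one.
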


A tree is called \emph{double starlike} if it has exactly two vertices of degree greater than two. Let $H(p,n,q)$ denote the double starlike tree obtained by attaching $p$ pendant vertices to one pendant vertex of the path $P_n$ and $q$ pendant vertices to the other pendant vertex of $P_n$. In \cite{LiuZL09,LuL12}, it was proved that $H(p,n,q)$ is determined by its $L$-spectrum. Thus, by Proposition \ref{BproL1}, we have the following result readily.

\begin{cor}
$H(p,n,q)$ is determined by its $A_{\alpha}$-spectrum.
\end{cor}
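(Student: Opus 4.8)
The plan is to derive this corollary as an immediate consequence of Proposition \ref{BproL1}, exactly as the parallel corollary for starlike trees was obtained. The entire logical content has already been packaged into that proposition, so the task reduces to checking that $H(p,n,q)$ satisfies its two hypotheses: that $H(p,n,q)$ is bipartite, and that $H(p,n,q)$ is determined by its $L$-spectrum.

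First I would observe that $H(p,n,q)$ is a tree by construction, being assembled from the path $P_n$ by attaching pendant vertices at its two ends; since every tree is bipartite, the first hypothesis of Proposition \ref{BproL1} is satisfied automatically. Second, I would invoke the results of \cite{LiuZL09,LuL12}, which establish precisely that $H(p,n,q)$ is determined by its $L$-spectrum, thereby supplying the second hypothesis. With both conditions in hand, Proposition \ref{BproL1} applies verbatim and yields that $H(p,n,q)$ is determined by its $A_{\alpha}$-spectrum.

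There is no genuine obstacle here: the substantive work lies entirely in the two cited $L$-spectrum determinations and in the proof of Proposition \ref{BproL1}, both of which may be taken as given. If anything, the only point deserving a word of care is to make explicit \emph{why} being bipartite is exactly the bridge that lets an $L$-spectral result transfer to the $A_{\alpha}$-setting, namely the fact that a bipartite graph has coinciding $L$-spectrum and $Q$-spectrum together with item (e) of Theorem \ref{ThDS:1}; but this is internal to Proposition \ref{BproL1} and need not be reproved. Consequently I expect the write-up to be a two-sentence deduction, mirroring the immediately preceding corollary on starlike trees.
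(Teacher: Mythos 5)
Your proposal matches the paper's own reasoning exactly: the paper derives this corollary by citing \cite{LiuZL09,LuL12} for the $L$-spectral determination of $H(p,n,q)$ and then applying Proposition \ref{BproL1}, with bipartiteness being automatic since $H(p,n,q)$ is a tree. Your write-up is correct and, if anything, slightly more explicit than the paper in spelling out why the bipartiteness hypothesis holds.
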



\medskip
\noindent \textbf{Acknowledgements}~~We greatly appreciate Professor V. Nikiforov for his suggestion of studying Paper \cite{Oliveira02} which contributes Examples \ref{Exam1}-\ref{Exam6}.


\end{CJK*}
\end{document}